\newcommand{\Z}{\mathbb{Z}}
\newcommand{\N}{\mathbb{N}}
\newcommand{\F}{\mathbb{F}}
\newcommand{\Q}{\mathbb{Q}}
\newcommand{\K}{\mathbb{K}}
\DeclareMathOperator{\kar}{char}
\DeclareMathOperator{\lcm}{lcm}
\newcommand{\Lring}{\mathcal{L}_{\mathrm{ring}}}
\newcommand{\pow}[1]{(\!(#1)\!)}
\theoremstyle{definition}
\newtheorem{theorem}{Theorem}[section]
\newtheorem{lemma}[theorem]{Lemma}
\newtheorem{proposition}[theorem]{Proposition}
\newtheorem{corollary}[theorem]{Corollary}
\theoremstyle{remark}
\newtheorem{remark}[theorem]{Remark}
\newtheorem{remarks}[theorem]{Remarks}
\author{Philip Dittmann}
\address{Institut für Algebra, Technische Universität Dresden, 01062 Dresden, Germany}
\email{philip.dittmann@tu-dresden.de}
\title{Two examples concerning existential undecidability in fields}
\begin{document}
\maketitle

\section{Introduction}

Given a field $K$, one may ask whether there is an algorithm to decide which multivariable polynomials with coefficients in the prime field have zeroes in $K$ -- in short, whether $K$ is \emph{existentially decidable}.
Motivated by Hilbert's Tenth Problem, much research has been done on this question in particular in global fields and function fields, see for instance the monograph \cite{Shlapentokh_book}.
On the other hand, this question is also of interest in henselian valued fields, where it is the first step of a good model-theoretic understanding of the full first-order theory.
See in particular \cite{AnscombeFehm_existential-old, AnscombeJahnke_Cohen, ADF_existential, Kartas_tame} for recent related work.

The chief aim of this note is to prove the following theorem, giving an interesting example of existential undecidability.
\begin{theorem}\label{thm:intro-valued}
  Let $p$ be a prime number.
  There exists a complete discretely valued field $(E,v)$ of characteristic $0$ and residue characteristic $p$ such that the residue field $Ev$ is existentially decidable, the set of polynomials in $\Q[X]$ with a zero in $E$ is decidable, but the field $E$ is existentially undecidable.
\end{theorem}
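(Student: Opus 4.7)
The plan is to realise $E$ as a complete discretely valued field of mixed characteristic, carefully designed so that a fixed r.e.\ undecidable set $S \subseteq \N$ (say, the halting problem) can be existentially encoded in $E$, while both the residue field $k = Ev$ and the algebraic closure of $\Q$ inside $E$ remain under computable control. Concretely, I would begin with a base field $E_0 = \mathrm{Frac}(C(k))$, the fraction field of a Cohen ring over a sufficiently rich but existentially decidable residue field $k$ of characteristic $p$: a natural candidate is $k = \overline{\F_p}\pow{t}$, which is fully decidable (by combining the decidability of algebraically closed fields of characteristic $p$ with an Ax--Kochen/Ersov-type transfer for tame henselian valued fields) and hence \emph{a fortiori} existentially decidable. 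The field $E$ would then be produced by adjoining to $E_0$ a computable sequence of algebraic elements $\{\alpha_n\}_{n \in S}$ and taking a final completion to restore the DVF structure.

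The heart of the construction lies in the choice of the $\alpha_n$, which must simultaneously be (i) existentially detectable, in the sense that there is a uniform existential $\Lring$-formula $\phi(n;y)$ with parameter $n \in \Z \subseteq E$ such that $E \models \exists y\,\phi(n;y)$ iff $n \in S$; and (ii) invisible to both the residue field and the algebraic-over-$\Q$ data. I would enforce (ii) by choosing the minimal polynomial of each $\alpha_n$ over $E_0$ to genuinely involve the lifted transcendental $t \in \mathcal{O}_{E_0}$, so that $\alpha_n$ is transcendental over $\Q$; and by arranging each finite extension $E_0(\alpha_n)/E_0$ to be unramified with residue extension lying inside a fixed, decidably presented algebraic subfield of $k$. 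These choices keep the ramification index $e(E/\Q_p)$ and the algebraic closure $\overline{\Q} \cap E$ both independent of $S$.

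With this setup, the three claims split cleanly. The residue field is $k$ by construction, hence existentially decidable. A polynomial $f \in \Q[X]$ has a zero in $E$ iff it has one in $\overline{\Q} \cap E$, a specific $S$-independent algebraic extension of $\Q$; deciding whether $f$ has a root there reduces, for each irreducible factor of $f$ over $\Q_p$, to checking whether the corresponding local field embeds into $E$, controlled by the (decidable) residue field and ramification data. Finally, the formula $\phi$ yields an existential $\Z$-parametric interpretation of $S$ in $E$, forcing the existential theory of $E$ to be undecidable.

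The main technical obstacle is the backward direction ``$E \models \exists y\, \phi(n;y)$ implies $n \in S$'': a priori, the witnesses $\alpha_m$ adjoined for $m \in S\setminus\{n\}$, or the final $p$-adic completion, could conspire to produce an unintended solution for some $n \notin S$. Ruling this out will require building $E$ as a filtered union of finite extensions whose generators are algebraically independent in a strong enough sense to block spurious solutions, together with a Krasner-type approximation argument ensuring that no new algebraic witnesses enter during the completion. This transcendence-and-completion bookkeeping is where essentially all of the substance of the theorem will lie.
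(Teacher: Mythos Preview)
Your plan has a genuine gap at precisely the point you label the ``main technical obstacle'', and the difficulty there is not bookkeeping but the entire content of the theorem: you have proposed no concrete mechanism by which an existential $\Lring$-sentence with only the integer parameter $n$ can detect whether $\alpha_n$ lies in $E$. Since each $\alpha_n$ is by design transcendental over $\Q$, no single polynomial equation over $\Q$ names it; you would need, for each $n$, a variety over the prime field whose solvability in $E$ is toggled by the presence of $\alpha_n$ alone, and nothing in your setup explains why such varieties should exist or how to produce them uniformly in $n$. There is also an internal inconsistency in your condition~(ii): you ask that $E_0(\alpha_n)/E_0$ be unramified with ``residue extension lying inside a fixed \dots\ subfield of $k$'', but the residue field of an unramified extension of $E_0$ is an extension of $k$, not a subfield of it; over the complete field $E_0$ the only way to keep $Ev=k$ is to make every $E_0(\alpha_n)/E_0$ totally ramified, the opposite of what you wrote.

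The paper's route is quite different and does not adjoin an $S$-indexed family of generators at all. All the hard work is done in characteristic $p$: using Ershov's theory of multi-valued fields together with a computable family of test varieties over $\F_p$, one first constructs a field $K$ of characteristic $p$ which is existentially decidable, has $\F_p$ relatively algebraically closed in it, and admits a separable quadratic extension $L=K(\alpha)$ (with $\alpha^2-\alpha=a\in K$) whose existential theory is undecidable. One then takes the complete mixed-characteristic field $F$ with residue field $K$ and uniformiser $p$, lifts $a$ to $b\in F$, and sets $E=F\bigl(\sqrt{p(1+4b)}\bigr)$. This $E$ is totally ramified over $F$, so $Ev=K$ is existentially decidable, and since $\F_p$ is relatively algebraically closed in $K$ the algebraic part of $E$ coincides with that of a fixed finite extension of $\Q_p$, hence is decidable. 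Undecidability enters because $E(\sqrt{p})$ contains $\sqrt{1+4b}$ and therefore has residue field $L$; the valuation ring of $E(\sqrt{p})$ is existentially $\Lring$-definable, so $L$ is existentially interpretable in $E(\sqrt{p})$ and hence in $E$. Thus the undecidability is carried by the residue field of a single quadratic extension of $E$, engineered by one ramified twist---not by a transcendence-and-completion argument of the kind you sketch.
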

This answers a question by Anscombe--Fehm in a strong way, see Remark \ref{rem:question-AF} for a discussion.

In order to prove this theorem, we use an example of a different phenomenon in existential decidability, which seems interesting in its own right.
\begin{theorem}\label{thm:intro-Kesavan}
  Let $p$ be a prime number.
  There exists an existentially decidable field of characteristic $p$ with an existentially undecidable quadratic extension.
\end{theorem}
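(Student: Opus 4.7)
The plan is to exhibit an existentially decidable field $K$ of characteristic $p$ together with a quadratic extension $L/K$ whose existential theory is undecidable. Writing any element of $L$ in a $K$-basis turns an existential formula over $\F_p$ in $L$ into an existential formula over $K$ with the generator of $L/K$ as a parameter. Hence the task reduces to constructing $K$ whose existential theory over $\F_p$ is decidable, but whose existential theory over $\F_p$ enlarged by a constant $d \in K$ is undecidable; we shall then take $L = K(\sqrt{d})$ (or a suitable Artin--Schreier extension in characteristic $2$).

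For the parameterised undecidability, I would invoke the undecidability of the positive existential theory of the rational function field $\F_p(d)$ (due to Pheidas for $p$ odd and extensions by Videla and others for $p = 2$). The construction should allow one to encode $\F_p(d)$-Diophantine problems into the $d$-parameterised existential theory of $K$, without $K$ containing $\F_p(d)$ itself as a subfield (which would already force the unparameterised existential theory of $K$ to be undecidable, via witnesses drawn from $\F_p(d) \subseteq K$). A natural route is to build $K$ as a carefully selected algebraic extension of $\F_p(d)$ inside $\overline{\F_p(d)}$, proceeding in stages. At each stage a new element is adjoined, chosen generically enough that it does not realise any existential $\F_p$-sentence not already true in some fixed decidable reference field, while committing to enough Diophantine information about $\F_p(d)$ to sustain the eventual reduction.

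The principal obstacle is the coordination of these two requirements. Each stage must preserve both (i) enough $\F_p(d)$-Diophantine information to sustain the parameterised undecidability of $L$, and (ii) enough $\F_p$-generic behaviour to preserve the decidability of the unparameterised existential theory of $K$. A back-and-forth construction is the natural framework, and the key technical step is a Galois-theoretic independence argument showing that the two requirements can always be jointly honoured at each stage. The undecidability of the existential theory of $L$ then follows by composing the rewriting into the $K$-basis of $L$ with the parameterised undecidability of $K$.
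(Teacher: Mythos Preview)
Your opening reduction runs in the wrong direction. Writing elements of $L = K(\sqrt{d})$ in a $K$-basis yields a quantifier-free interpretation of $L$ in the structure $(K,d)$, and hence a many-one reduction of $\mathrm{Th}_\exists(L)$ to $\mathrm{Th}_\exists(K,d)$: decidability of $(K,d)$ would imply decidability of $L$, not the other way round. To conclude that $L$ is existentially undecidable from the undecidability of $(K,d)$ you would need an existential interpretation going the other way --- in particular a parameter-free existential $\Lring$-definition of the subfield $K$ inside $L$, together with a parameter-free way to single out the element $d$. Neither is available in general, and nothing in your back-and-forth sketch provides them. (There is also an internal tension: you propose to build $K$ as an algebraic extension of $\F_p(d)$ while asserting that $K$ should not contain $\F_p(d)$.) The Pheidas-type undecidability you invoke is fundamentally about a \emph{named} transcendental, and that name is precisely what is lost in passing from $(K,d)$ to the parameter-free theory of $K(\sqrt{d})$.

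For comparison, the paper avoids parameters entirely on the undecidability side. It constructs a field $\K/\F_p(t)$ satisfying an Ershov local-global principle, shows that $\mathrm{Th}_\exists(\K) = \mathrm{Th}_\exists(\F_p\pow{s})$ is decidable, and then, using families of test varieties defined over $\F_p$ together with a saturation argument, produces a cyclic degree-$4$ extension $L$ of an elementary extension $\K^\ast$ whose existential $\Lring$-theory encodes an arbitrary set of primes. One of the two quadratic steps in the tower $\K^\ast \subset L_0 \subset L$ then gives the required pair.
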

A variant of this problem was first considered in Kesavan Thanagopal's thesis \cite{Kesavan}, where an example was given in characteristic $0$.
We modify the construction given there, based on Ershov's theory of fields with a strong local-global principle presented in \cite{Ershov}.

\subsection*{Acknowledgements}

I became aware of the examples presented here some years ago.
I would like to thank Arno Fehm for encouraging me to commit them to writing, as well as for comments on a draft version.

\section{A useful family of varieties}

Let $p$ be a prime number, $q>1$ a power of $p$.
In this section we prove the following proposition, which will be useful later.
\begin{proposition}\label{prop:testVariety}
  Let $n \geq 1$.
  There exists a smooth projective geometrically integral variety $V/\F_q$ such that for any $m \geq 1$ we have:
  \begin{itemize}
  \item If $m \mid n$, then $V(\F_{q^m}) = \emptyset$;
  \item if $\lcm(m, n) \geq 4n$, then $V(\F_{q^m}) \neq \emptyset$.
  \end{itemize}
\end{proposition}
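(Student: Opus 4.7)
The strategy is to take a suitable pointless curve over $\F_{q^n}$ and apply Weil restriction down to $\F_q$. Concretely, I plan to find a smooth projective geometrically integral curve $C/\F_{q^n}$ of genus $g$ satisfying $C(\F_{q^n}) = \emptyset$ and $g < q^{2n}/2$, and then set $V := \operatorname{Res}_{\F_{q^n}/\F_q}(C)$. Using the standard decomposition $\F_{q^m} \otimes_{\F_q} \F_{q^n} \cong (\F_{q^{\lcm(m,n)}})^{\gcd(m,n)}$ together with the defining property of Weil restriction, one computes
\[
  V(\F_{q^m}) \;=\; C\bigl(\F_{q^m} \otimes_{\F_q} \F_{q^n}\bigr) \;=\; C\bigl(\F_{q^{\lcm(m,n)}}\bigr)^{\gcd(m,n)}.
\]

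Both claims follow quickly from this identity. If $m \mid n$, then $\lcm(m,n) = n$ and the right-hand side equals $C(\F_{q^n})^m = \emptyset$ by choice of $C$. If $\lcm(m,n) = kn$ with $k \geq 4$, then $g < q^{2n}/2 \leq q^{kn/2}/2$ yields $2g\,q^{kn/2} < q^{kn}$, whence the Weil lower bound $\#C(\F_{q^{kn}}) \geq q^{kn} + 1 - 2g\,q^{kn/2}$ is strictly positive and $V(\F_{q^m}) \neq \emptyset$. For the structural conditions on $V$: Weil restriction along the finite étale extension $\F_{q^n}/\F_q$ preserves smoothness and projectivity; and $V \otimes_{\F_q} \overline{\F_q} \cong \prod_{\sigma} C^{\sigma}$, indexed by the embeddings $\sigma\colon \F_{q^n} \hookrightarrow \overline{\F_q}$, is a finite product of smooth geometrically integral varieties over an algebraically closed field, hence itself integral, giving geometric integrality of $V$ over $\F_q$.

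The only remaining input, which I see as the main obstacle, is the existence of the curve $C$ itself. Pointlessness over $\F_{q^n}$ together with the Weil lower bound forces $g \geq (q^n+1)/(2q^{n/2})$, so $g$ has to lie in the (always non-empty) interval $[(q^n+1)/(2q^{n/2}), q^{2n}/2)$. In odd characteristic I plan to realise such a $C$ as a hyperelliptic curve $y^2 = f(x)$ with $f \in \F_{q^n}[x]$ of a suitable degree, choosing $f$ -- for instance via Lagrange interpolation, once the degree is large enough to provide freedom -- so that $f(x_0)$ is a non-square at every $x_0 \in \F_{q^n}$ (and so the leading coefficient is a non-square, killing the points at infinity too); an analogous Artin--Schreier construction handles characteristic two, with any remaining small cases handled by appealing to the known classification of pointless curves of small genus over small finite fields.
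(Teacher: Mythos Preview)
Your proposal is correct and follows essentially the same approach as the paper: take the Weil restriction to $\F_q$ of a pointless curve over $\F_{q^n}$ whose genus is small enough that the Hasse--Weil bound guarantees points over $\F_{q^{kn}}$ once $k \geq 4$, and read off $V(\F_{q^m})$ via the tensor decomposition $\F_{q^m}\otimes_{\F_q}\F_{q^n}\cong \F_{q^{\lcm(m,n)}}^{\gcd(m,n)}$. The only difference is that the paper obtains the pointless hyperelliptic curve by quoting a result of Becker--Glass, whereas you sketch the interpolation construction directly (which is essentially what Becker--Glass do); just be careful that your $f$ is squarefree of even degree with non-square leading coefficient, since this does not follow automatically from prescribing non-square values on $\F_{q^n}$.
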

For definiteness, in this article we take a \emph{variety} (over a specified base field) to be a separated scheme of finite type, although almost all varieties occurring will be quasi-projective and geometrically integral.

The proof of Proposition \ref{prop:testVariety} relies on the following lemma.
\begin{lemma}
  There exists a smooth projective geometrically integral curve $C/\F_q$ such that $C(\F_q) = \emptyset$, but $C(k) \neq \emptyset$ for any field extension $k/\F_q$ with $4 \leq [k : \F_q] < \infty$.
\end{lemma}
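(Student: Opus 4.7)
The plan is to produce a smooth projective geometrically integral curve $C/\F_q$ whose genus $g$ is large enough to admit $C(\F_q) = \emptyset$, yet small enough that the Hasse--Weil bound forces $C(\F_{q^m}) \neq \emptyset$ for every $m \geq 4$. Since any finite extension $k/\F_q$ of degree $\geq 4$ is isomorphic to some $\F_{q^m}$ with $m \geq 4$, this will suffice.

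First I would carry out the Weil bound estimate. For any smooth projective geometrically integral curve $C/\F_q$ of genus $g$, the Hasse--Weil bound gives
$$|C(\F_{q^m})| \geq q^m + 1 - 2g\, q^{m/2}.$$
Setting $y = q^{m/2}$, the right-hand side equals $y^2 - 2gy + 1$, which is positive exactly when $y > g + \sqrt{g^2 - 1}$. Since $y$ is increasing in $m$, the binding case is $m = 4$, and the condition reduces to $g < (q^4 + 1)/(2q^2)$, essentially $g < q^2/2$. Hence any pointless curve of genus $g$ in this range automatically satisfies $C(\F_{q^m}) \neq \emptyset$ for every $m \geq 4$.

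The main obstacle is then the second step: exhibiting a pointless smooth projective geometrically integral curve $C/\F_q$ of some genus $g$ with $g < q^2/2$. For small $q$ (say $q \leq 13$), Serre's refinement $|C(\F_q)| \geq q + 1 - g\lfloor 2\sqrt{q}\rfloor$ permits pointlessness already at $g = 2$, and explicit hyperelliptic examples are well-documented. For larger $q$ a somewhat higher genus --- still comfortably within $g < q^2/2$ --- is needed, and I would invoke classical existence results (Serre, Lauter, Howe--Lauter) for pointless curves of prescribed genus over finite fields, or else construct them explicitly as Artin--Schreier or Kummer covers of $\mathbb{P}^1$ tailored to avoid rational points. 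Combining these two ingredients then yields the curve claimed in the lemma.
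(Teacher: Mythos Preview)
Your approach is correct and essentially identical to the paper's: both use the Hasse--Weil bound to guarantee $C(\F_{q^m}) \neq \emptyset$ for $m \geq 4$ once the genus is suitably bounded, and then invoke an existence result for pointless curves of controlled genus over $\F_q$. The only difference is specificity at the second step---the paper cites \cite[Lemma 2.2]{BeckerGlass_pointless-hyperelliptic} to obtain a hyperelliptic pointless curve of genus $g$ with $(q-3)/2 < g \leq (q-3)/2 + p$ (comfortably within your range $g < q^2/2$), whereas you gesture toward the broader Serre/Lauter/Howe--Lauter literature without pinning down a single reference that covers all $q$ uniformly.
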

\begin{proof}
  Let $g$ be the smallest integer bigger than $\frac{q-3}{2}$ with $g \equiv -1 \pmod{p}$, so $\frac{q-3}{2} < g \leq \frac{q-3}{2} + p$.
  By \cite[Lemma 2.2]{BeckerGlass_pointless-hyperelliptic}, there exists a hyperelliptic curve $C/\F_q$ of genus $g$ with $C(\F_q) = \emptyset$.

  The number of $\F_{q^m}$-rational points of $C$ is at least $q^m + 1 - 2g\sqrt{q^m} \geq q^m + 1 - (q-3 + 2p)q^{m/2}$ by the Hasse--Weil bound.
  This is positive if $q-3 + 2p \leq q^{m/2}$, which is the case if $m \geq 4$.
\end{proof}

\begin{remark}
  The situation would be neater if we could strengthen the lemma to say that $C(k) \neq \F_q$ for any proper finite extension $k/\F_q$, in which case we could also strengthen the proposition to say that $V(\F_{q^m}) = \emptyset$ if and only if $m \mid n$.

  In order to improve the lemma in this way, one would need to improve the construction of Becker and Glass, finding a bound for the genus which is better than linear in $q$.
This works at least for specific values for $q$ in any characteristic $p > 3$, see \cite{Yekhanin_plane-pointless-curves}.
\end{remark}

\begin{proof}[Proof of the proposition]
  Let $C/\F_{q^n}$ be a curve as in the lemma, so that $C(\F_{q^n}) = \emptyset$ but $C(\F_{q^{nl}}) \neq \emptyset$ for $l \geq 4$.
  Let $V/\F_q$ be the Weil restriction of $C$.
  It is a smooth projective geometrically integral variety over $\F_q$ because $C/\F_{q^n}$ is so:
  Indeed, by \cite[Proposition A.5.9]{pseudoreductive} $V$ is smooth and geometrically connected (hence geometrically integral), and by \cite[Proposition A.5.8]{pseudoreductive} and \cite[Proposition 7.6/5]{BLR} $V$ is quasi-projective and proper, hence projective.

  By the defining property, for any $m$ the set $V(\F_{q^m})$ is in bijection to $C(\F_{q^m} \otimes_{\F_q} \F_{q^n})$.
  For $m \mid n$ we have $C(\F_{q^m} \otimes_{\F_q} \F_{q^n}) = C(\F_{q^n}^m) = C(\F_{q^n})^m = \emptyset$.

  Now let $m \geq 1$ with $\lcm(n, m) \geq 4n$.
  We have $\F_{q^m} \otimes_{\F_q} \F_{q^m} = \F_{q^{\lcm(n,m)}}^{nm/\lcm(n,m)}$.
  Then $C(\F_{q^m} \otimes_{\F_q} \F_{q^n}) = C(\F_{q^{\lcm(n,m)}}^{nm/\lcm(n,m)}) \neq \emptyset$ since $C(\F_{q^{\lcm(n,m)}}) \neq \emptyset$ by the defining property of $C$.
  This proves the desired property of $V$.
\end{proof}

\begin{remark}\label{rem:testVarietyComputable}
  For given $q$ and $m$, a variety $V$ as in the proposition can be effectively determined, simply by enumerating varieties, testing for points over small fields, and using the Hasse--Weil bound.
\end{remark}

\section{The construction}

Fix again a prime $p$.
We find an extension field $\K$ of $\F_p(t)$ satisfying a strong local-global principle, after Ershov.

We first fix some terminology.
A \emph{discrete} valuation is a Krull valuation whose value group is isomorphic to $\Z$, i.e.\ is given by a discrete valuation ring in the usual sense of commutative algebra.
We do not distinguish between valuations and their valuation rings, so in particular we identify equivalent valuations.
A valuation of $\F_p(t)$ is said to be \emph{above $\F_p[t]$} if its valuation ring contains $\F_p[t]$, i.e.\ if it is not the degree valuation of $\F_p(t)$.

\begin{proposition}
  There exists a countable regular field extension $\K/\F_p(t)$ together with a family $V$ of discrete valuations such that the following hold:
  \begin{enumerate}
  \item For every $v \in V$, the restriction of $v$ to $\F_p(t)$ is again a discrete valuation, which lies above $\F_p[t]$.
    Further, the extension $(\K,v)/(\F_p(t), v|_{\F_p(t)})$ is immediate, i.e.\ the residue fields $\K v$ and $\F_p(t)v|_{\F_p(t)}$ agree and a uniformiser for $v|_{\F_p(t)}$ remains a uniformiser for $v$.
  \item For every discrete valuation $v_0$ of $\F_p(t)$ above $\F_p[t]$ there is precisely one $v \in V$ prolonging $v_0$.
  \item Any $x \in \K$ is in the valuation ring of all but finitely many $v \in V$.
  \item If a geometrically integral variety $X/\K$ has a smooth $\K_v$-point for every $v \in V$ (where $\K_v$ is the henselisation), then it has a $\K$-point.
  \end{enumerate}
\end{proposition}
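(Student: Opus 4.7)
The plan is to construct $\K$ as a countable ascending union $\K = \bigcup_{n \geq 0} \K_n$ of countable regular extensions of $\F_p(t)$, starting from $\K_0 = \F_p(t)$, following Ershov's paradigm \cite{Ershov}. Each $\K_n$ will carry a family $V_n$ of discrete valuations in canonical bijection with $V_0$, the set of discrete valuations of $\F_p(t)$ above $\F_p[t]$, via restriction, and all extensions will be immediate. Properties (1)--(3) are preserved at every finite stage and pass to the limit; property (4) is achieved by a dovetailing enumeration.

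At the inductive step, I enumerate geometrically integral $\K_n$-varieties $X$ satisfying the local hypothesis that $X$ has a smooth $\K_{n,v}$-point at every $v \in V_n$. This is effectively verifiable: by the Lang--Weil estimates together with Hensel's lemma, the existence of smooth $\K_{n,v}$-points is automatic at all but finitely many $v$ from good reduction and sufficiently large residue field, while the remaining finite list of ``bad'' places can be checked directly over the finite residue fields. Whenever such an $X$ is produced, I pass via a Bertini-type argument (combined with $v$-adic approximation on the Grassmannian of linear sections at the bad places) to a smooth projective geometrically integral curve $C \subseteq X$ still inheriting smooth $\K_{n,v}$-points everywhere in $V_n$.

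To adjoin a point, set $\K_{n+1} = \K_n(C)$, a regular transcendental extension of $\K_n$ of transcendence degree one. For each $v \in V_n$, choose a smooth $\K_{n,v}$-point $P_v \in C(\K_{n,v})$ together with a transcendental element $\xi_v \in \widehat{\K_{n,v}}$ that $v$-adically approximates $P_v$; specialising the generic point of $C$ to $\xi_v$ yields a $\K_n$-embedding $\K_n(C) \hookrightarrow \widehat{\K_{n,v}}$, and restricting the valuation of $\widehat{\K_{n,v}}$ produces a discrete prolongation $v'$ of $v$ to $\K_{n+1}$ with the same value group and residue field as $v$, hence immediate. Let $V_{n+1}$ be the set of such $v'$. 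Properties (1) and (2) are then immediate from the construction; property (3) is preserved by choosing the $\xi_v$'s consistently with a fixed integral model of $C$, so that the two generators of $\K_n(C)$ over $\K_n$ land in the valuation ring of $v'$ for all $v$ of good reduction, a cofinite set by the inductive property (3) applied to the coefficients defining $C$.

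The main obstacle is the simultaneous control of all four properties at the inductive step: in particular, the Bertini-type construction of $C$ passing through the prescribed local points while remaining smooth and geometrically integral, together with the consistent choice of integral models needed to secure property (3). For property (4) in the limit, given a geometrically integral $X/\K$ with a smooth $\K_v$-point for every $v \in V$, the variety $X$ is defined over some $\K_n$; each local point descends to a smooth $\K_{n,v}$-point because $\K_v/\K_{n,v}$ is a henselian immediate extension, and smoothness lifts by Hensel's lemma. The pair thus appears in the enumeration at some later stage and receives a $\K$-rational point.
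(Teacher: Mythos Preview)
Your approach reconstructs Ershov's field by hand, whereas the paper simply invokes \cite[Theorem 3.6.3]{Ershov} as a black box after checking its hypotheses (that the discrete valuations of $\F_p(t)$ above $\F_p[t]$ form a near Boolean family with residue fields regularly closed at infinity), and then extracts (1)--(4) from its conclusion: property (3) from the induced homeomorphism of Zariski spectra, and property (4) by upgrading Ershov's affine local--global principle LG\textsubscript{G} to arbitrary varieties via ampleness of the henselisations $\K_v$. Your inductive construction is essentially what underlies Ershov's theorem, so the two routes agree in content; yours buys self-containment at the cost of many details the paper simply avoids. Two minor points: the ``effective verifiability'' of the local hypothesis is irrelevant to the construction (only countability matters for the dovetailing), and the Bertini reduction to a curve is unnecessary---one may adjoin $\K_n(X)$ directly---and as written is slightly off, since a curve contained in an affine $X$ cannot be projective.

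There is, however, one genuine gap in your justification of property (4). You assert that a smooth $\K_v$-point of $X$ ``descends'' to a smooth $\K_{n,v}$-point because $\K_v/\K_{n,v}$ is an immediate henselian extension and ``smoothness lifts by Hensel's lemma''. Hensel's lemma does not do this: it lifts residue-field points upward to a henselian field, not points of an overfield downward to a subfield. What is actually needed is that $\K_{n,v}$ is existentially closed in $\K_v$. Since $\K_v/\K_{n,v}$ is an immediate extension of discretely valued fields, $\K_v$ embeds into the completion of $\K_{n,v}$, and existential closedness of an equicharacteristic henselian field in its completion is a theorem (\cite[Theorem 5.9]{Kuhlmann_tame} or \cite[Corollary 7.2]{AnscombeFehm_existential-old}), not a direct consequence of Hensel's lemma. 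With this input your argument goes through.
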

\begin{proof}
  This is a consequence of \cite[Theorem 3.6.3]{Ershov}, as we now explain.
  
  Let $V_0$ be the family of discrete valuation rings of $\F_p(t)$ above $\F_p[t]$.
  Then any two distinct members of $V_0$ are independent;
  $V_0$ is a near Boolean family in Ershov's sense since $\F_p[t]$ is an ``NB-ring'' \cite[Remark 2.5.1]{Ershov} and the valuation rings of the valuations in $V_0$ are precisely the localisations of $\F_p[t]$ at its maximal ideals \cite[Proposition 2.5.3]{Ershov};
  and the residue fields of $V_0$ are ``regularly closed at infinity'' \cite[Section 3.4, p.~172]{Ershov}, as they are finite fields with only finitely many of cardinality lower than a given bound, and so the desired property follows from the Lang-Weil bounds \cite[Theorem 7.7.1(iv)]{Poonen}.

  Thus by \cite[Theorem 3.6.3]{Ershov}, there exists a countable regular extension $\K/\F_p(t)$ with a family of valuation rings $V$ such that every valuation $v \in V$ lies over a valuation $v_0 \in V_0$, this induces a bijection between $V$ and $V_0$, and the extension of valued fields $(\K, v)/(\F_p(t), v_0)$ is immediate.
  In particular every $v \in V$ is discrete, and conditions (1) and (2) are satisfied.

  The bijection $V \to V_0$ is furthermore a homeomorphism with respect to the Zariski topologies on $V$ and $V_0$ (see \cite[Section 2.2]{Ershov}), which means that for all $x \in \K$ the (``Zariski closed'') set $C_x := \{ v \in V \colon v(x) < 0 \}$ is either finite or all of $V$ since the analogous statement holds in $\F_p(t)$.
  However, we cannot have $C_x = V$, since otherwise for a suitable element $b \in \F_p(t)^\times$ (a high power of a uniformiser for some valuation in $V_0$) the set $C_{xb}$ would be infinite but strictly contained in $V$, violating the homeomorphism property.
  Therefore the set $C_x$ is finite for all $x \in \K$.
  This gives condition (3).
  
  In addition, the family $V$ satisfies Ershov's ``arithmetic local-global principle'' LG\textsubscript{A}, and therefore also the ``geometric local-global principle'' LG\textsubscript{G} \cite[Proposition 3.2.5]{Ershov}, which gives our condition (4) for geometrically integral affine varieties $X/\K$.
  Now take an arbitrary geometrically integral variety $X/\K$, and let $X_0/\K$ be an affine dense open subvariety.
  If $X$ has a smooth $\K_v$-point for every $v \in V$, then the same holds for $X_0$: This is the ampleness of the henselian field $\K_v$ (see \cite[Corollary 3.1.6]{Ershov} and the surrounding discussion).
  Hence we have $\emptyset \neq X_0(\K) \subseteq X(\K)$ by the affine case, proving (4) in full generality.
\end{proof}

\begin{remarks}
  \begin{enumerate}
  \item Fields $\K$ as in the proposition are weak analogues of the ``surprising extensions of $\Q$'' considered in \cite{Ershov_surprising} (also variously translated as ``wonderful'' or ``amazing'' extensions).
    Note, however, that there also the place at infinity, i.e.\ the real place of $\Q$, is included.
  \item Since it plays no role in the sequel, we have not imposed the condition which is called maximality in \cite{Ershov_surprising}, i.e.\ that for every proper separable algebraic extension $L/\K$, some valuation in $V$ has no immediate extension to $L$.
    This condition can however always be added, see \cite[Proposition 4.4.3, Remark 4.4.3, Proposition 4.4.4]{Ershov}.
  \item Any non-trivial valuation $v$ of $\K$ not in $V$ always has separably closed henselisation, and hence poses no obstruction to the existence of rational points on varieties.
    This follows from \cite[Corollary 3.5.4]{Ershov} (there stated for boolean families of valuations, but the same proof works for near-boolean families with residue fields regularly closed at infinity).
    In particular, the family $V$ simply consists of all discrete valuations of $\K$.
  \item Instead of starting with the discrete valuations of $\F_p(t)$ above $\F_p[t]$, we could have worked with the coordinate ring of any irreducible smooth affine curve over $\F_p$ and its function field.
  \end{enumerate}
\end{remarks}

We henceforth fix a field $\K$ as in the proposition.

\begin{lemma}\label{lem:points-finite-ext}
  Let $L/\K$ be a finite separable extension.
  Let $X/\F_p$ be a smooth projective geometrically integral variety.
  Then $X(L) \neq \emptyset$ if and only if for every $v \in V$ and every prolongation $w$ of $v$ to $L$, $X$ has a point over the residue field $Lw$.
\end{lemma}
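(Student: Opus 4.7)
The plan is to reduce the statement to the local-global principle (4) for $\K$ via Weil restriction. I set $Y := \operatorname{Res}_{L/\K}(X \times_{\F_p} L)$; since $L/\K$ is finite separable and $X/\F_p$ is smooth, projective and geometrically integral, the same references cited in the proof of Proposition \ref{prop:testVariety} give that $Y$ is smooth, projective and geometrically integral over $\K$. The functorial identity $Y(R) = X(R \otimes_\K L)$ for a $\K$-algebra $R$, combined with the canonical decomposition $\K_v \otimes_\K L \cong \prod_{w \mid v} L_w$ over prolongations $w$ of $v$ to $L$, yields the two basic translations $Y(\K) = X(L)$ and $Y(\K_v) = \prod_{w \mid v} X(L_w)$.

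The ``only if'' direction is then straightforward: a point in $X(L)$ gives a point in $X(L_w)$ for each $w \mid v$, which by the valuative criterion of properness applied to $X$ as a proper scheme over $\F_p \subseteq \mathcal{O}_{L_w}$ extends uniquely to a point in $X(\mathcal{O}_{L_w})$; reducing modulo the maximal ideal produces an element of $X(Lw)$.

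For the ``if'' direction, I first lift each given residue field point to the henselisation: since $X/\F_p$ is smooth, its base change to $\mathcal{O}_{L_w}$ is smooth, and because $\mathcal{O}_{L_w}$ is a henselian local ring, the reduction map $X(\mathcal{O}_{L_w}) \to X(Lw)$ is surjective by the standard lifting property of smooth morphisms over henselian local rings. Composing with $X(\mathcal{O}_{L_w}) \to X(L_w)$ and assembling over all $w \mid v$ yields a point in $Y(\K_v)$. Such a point is automatically smooth because $Y/\K$ is smooth, and so the local-global principle (4) produces a $\K$-point of $Y$, equivalently an $L$-point of $X$.

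The only delicate matter is ensuring that $Y$ retains the smoothness and geometric integrality needed for (4) to apply — but this is exactly the package of Weil restriction facts already invoked in the proof of Proposition \ref{prop:testVariety}, and no new obstacle arises here.
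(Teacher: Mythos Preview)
Your proof is correct and follows essentially the same route as the paper. The paper cites \cite[Proposition 3.4.1]{Ershov}, which packages the Weil restriction argument you carry out explicitly, to say directly that the local-global principle transfers from $(\K,V)$ to $(L,W)$; and for the passage between $X(L_w)$ and $X(Lw)$ the paper uses projectivity (clear denominators and reduce) together with the embedding $Lw \hookrightarrow L_w$ available because the residue fields are finite, whereas you use the valuative criterion of properness and Hensel lifting along the smooth $X$ --- all equivalent in this setting.
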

\begin{proof}
  Let $W$ be the family of prolongations of valuations in $V$ to $L$.
  By \cite[Proposition 3.4.1]{Ershov} (a Weil restriction argument), the same local-global principle as for $V$ holds for $W$.
  In particular, $X(L) \neq \emptyset$ if and only if $X$ has a point over all henselisations $L_w$, $w \in W$.

  Let $w \in W$.
  If $X$ has a point over the henselisation $L_w$, then it has a point over the residue field $Lw$, using that $X$ is projective (given homogenous coordinates of an $L_w$-point of $X$, clear denominators and reduce).\footnote{Using
    the valuative criterion of properness, it would suffice to assume that $X$ is proper instead of projective.}
  Conversely, if $X$ has a point over the residue field $Lw$, then it has a point over the henselisation $L_w$ since there exists an embedding $Lw \hookrightarrow L_w$
  (apply Hensel's Lemma to the minimal polynomial of a primitive element of $Lw$ over $\F_p$).
  Together with the local-global principle, this proves the statement.
\end{proof}

We next wish to find finite extensions $L/\K$ such that the discrete valuations of $L$ have prescribed residue fields.
This is achieved by the following lemmas.

\begin{lemma}\label{lem:prescribed-residue-fields-global}
  Let $S_1$, $S_2$ be two disjoint finite sets of prime numbers greater than $4$.
  There exists a cyclic extension $L_0/\F_p(t)$ of degree $4$ such that:
  \begin{enumerate}
  \item For every $l \in S_1$, there exists a discrete valuation of $L_0$ above $\F_p[t]$ with residue field $\F_{p^l}$.
  \item For every $l \in S_2$ and every $\F_{p^m}$ occurring as the residue field of a valuation of $L_0$, we have $\lcm(l,m) \geq 4l$.
  \end{enumerate}
\end{lemma}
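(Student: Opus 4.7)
My plan is to construct $L_0$ via global class field theory, invoking the Grunwald--Wang theorem for the function field $\F_p(t)$. Cyclic degree-$4$ extensions of $\F_p(t)$ correspond to surjective continuous characters $\chi\colon \mathrm{Gal}(\F_p(t)^{\mathrm{sep}}/\F_p(t)) \to \Z/4\Z$; at an unramified valuation $v_0$ of $\F_p(t)$ above $\F_p[t]$ of degree $d$, the residue degree of any valuation of $L_0$ above $v_0$ equals $d\cdot\mathrm{ord}(\chi(\mathrm{Frob}_{v_0}))$, and at a tamely ramified $v_0$ it equals $d\cdot f$ for some $f \in \{1, 2\}$ determined by the inertia.

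Let $D$ be the set of ``bad'' residue degrees, i.e.\ $D := \{m \geq 1 : \lcm(l, m) < 4l \text{ for some } l \in S_2\}$; concretely, $D = \{1,2,3\}\cup\bigcup_{l\in S_2}\{l, 2l, 3l\}$ when $S_2 \neq \emptyset$, and $D = \emptyset$ otherwise. Let $\mathcal{V}$ be the finite set of valuations of $\F_p(t)$ above $\F_p[t]$ whose degree lies in $D$, augmented by one chosen degree-$l$ valuation $w_l$ for each $l \in S_1$ (such $w_l$ exists because there are irreducible polynomials of every degree in $\F_p[t]$). I prescribe local unramified characters $\chi_v$ at each $v \in \mathcal{V}$ as follows: at $w_l$ ($l\in S_1$), set $\chi_v(\mathrm{Frob}_v) = 0$; at a place of degree $1$ or of degree $l$ ($l \in S_2$), set $\chi_v(\mathrm{Frob}_v) = 1 \in \Z/4\Z$ (order $4$); at a place of degree $2, 3, 2l$, or $3l$ ($l \in S_2$), set $\chi_v(\mathrm{Frob}_v) = 2$ (order $2$). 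By the Grunwald--Wang theorem for function fields---valid without obstruction for $p \neq 2$, and handled analogously via Artin--Schreier--Witt parametrisation of cyclic degree-$4$ extensions when $p = 2$---there exists a global surjective character $\chi$ realising these local data, and I let $L_0/\F_p(t)$ be the corresponding cyclic degree-$4$ extension.

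To verify condition (1): each $w_l$ ($l \in S_1$) has trivial Frobenius, hence splits completely in $L_0$, giving a valuation of residue field $\F_{p^l}$. To verify condition (2): take any valuation $w$ of $L_0$ above $\F_p[t]$ with residue degree $m$, and let $v = w|_{\F_p(t)}$ of degree $d$. If $v \in \mathcal{V}$, the prescribed order of $\chi(\mathrm{Frob}_v)$ directly forces $m \notin D$ (for instance, an order-$4$ Frobenius at a degree-$l$ place with $l \in S_2$ gives $m = 4l$, and $\lcm(l,4l)=4l$); if $v \notin \mathcal{V}$, then $d \notin D$, and a short case analysis using the primality of elements of $S_2$ and the bound $l > 4$ shows that $\{d, 2d, 4d\} \cap D = \emptyset$, so $m \in \{d, 2d, 4d\}$ (or $\{d, 2d\}$ if $v$ is ramified) is also outside $D$. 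The main obstacle I expect is justifying the applicability of Grunwald--Wang in positive characteristic: this is straightforward in odd characteristic, where no Wang exception arises for $n = 4$ over function fields, but in characteristic $2$ it requires an explicit appeal to Artin--Schreier--Witt theory together with an additive local-global principle for $\Z/4\Z$-torsors in place of the usual multiplicative statement.
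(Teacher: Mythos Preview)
Your proposal is correct and follows essentially the same route as the paper: both invoke the Grunwald--Wang theorem to construct a cyclic degree-$4$ extension of $\F_p(t)$ with prescribed local behaviour at finitely many places, splitting completely at one place of each degree $l\in S_1$ and forcing large residue degree at the places whose degree lies in the ``bad'' set. The paper's prescription is marginally simpler---it makes every place of degree $n\le 4\max(S_2)$ with $n\notin S_1$ inert, rather than your mix of order-$2$ and order-$4$ Frobenii---and it cites \cite[Theorem~9.2.8]{NSW} uniformly without isolating the $p=2$ case, but your more careful treatment of that case and your explicit verification that $\{d,2d,4d\}\cap D=\emptyset$ for $d\notin D$ are perfectly fine refinements.
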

\begin{proof}
  Let $L_0/\F_p(t)$ be a cyclic extension of degree $4$ in which each of the (non-zero) finitely many discrete valuations of $\F_p(t)$ with residue field $\F_{p^l}$, $l \in S_1$, is completely split, and each of the finitely many discrete valuations of $\F_p(t)$ with residue field $\F_{p^n}$, $S_1 \not\ni n \leq 4 \max(S_2)$, is inert.
  The existence of such an extension follows from the Grunwald--Wang Theorem \cite[Theorem 9.2.8]{NSW}, which allows the construction of abelian extensions of $\F_p(t)$ in which the decomposition behaviour of finitely many places is prescribed.
  The field $L_0$ satisfies the required properties.
\end{proof}

\begin{lemma}\label{lem:prescribed-residue-fields-wonderful}
  Let $S_1$, $S_2$ be two disjoint finite sets of prime numbers greater than $4$.
  Then there exists a cyclic extension $L/\K$ of degree $4$ such that conditions (1) and (2) from Lemma \ref{lem:prescribed-residue-fields-global} hold for $L$ (in place of $L_0$).
\end{lemma}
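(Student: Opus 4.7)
The plan is to take $L_0/\F_p(t)$ as produced by Lemma~\ref{lem:prescribed-residue-fields-global} and set $L := L_0 \cdot \K$ inside a fixed algebraic closure of $\K$. Regularity of $\K/\F_p(t)$ implies that $L_0$ and $\K$ are linearly disjoint over $\F_p(t)$, so $L = L_0 \otimes_{\F_p(t)} \K$ is a field and $L/\K$ is cyclic of degree $4$, with Galois group canonically identified with $\mathrm{Gal}(L_0/\F_p(t))$.

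The crucial step is to match prolongations of valuations from $\K$ to $L$ with those from $\F_p(t)$ to $L_0$. Fix $v \in V$, set $v_0 := v|_{\F_p(t)} \in V_0$, and let $\widehat{\K}_v$, $\widehat{\F_p(t)}_{v_0}$, $\widehat{L}_w$, $\widehat{(L_0)}_{w_0}$ denote completions at the respective valuations. Since $(\K,v)/(\F_p(t),v_0)$ is immediate and discretely valued (condition (1) of the proposition), every element of $\K$ is a $v_0$-Cauchy limit of elements of $\F_p(t)$, so $\widehat{\K}_v = \widehat{\F_p(t)}_{v_0}$. Combining this with the standard decomposition of a finite separable extension over a complete discretely valued field gives
\[
  \prod_{w \mid v} \widehat{L}_w \;=\; L \otimes_\K \widehat{\K}_v \;=\; L_0 \otimes_{\F_p(t)} \widehat{\F_p(t)}_{v_0} \;=\; \prod_{w_0 \mid v_0} \widehat{(L_0)}_{w_0}.
\]
Matching factors yields a bijection between prolongations $w$ of $v$ to $L$ and prolongations $w_0$ of $v_0$ to $L_0$, with $w|_{L_0} = w_0$ and $\widehat{L}_w \cong \widehat{(L_0)}_{w_0}$ as valued fields; in particular the residue fields agree, $Lw = L_0 w_0$.

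It remains to observe that every discrete valuation $w$ of $L$ arises this way. Indeed, $w|_\K$ is a non-trivial discrete valuation (as $L/\K$ is algebraic), which lies in $V$ by Remark~(3) after the proposition, so $w|_{\F_p(t)} \in V_0$ is above $\F_p[t]$. The residue fields of discrete valuations of $L$ are therefore precisely those of valuations of $L_0$ lying above $\F_p[t]$. Conditions (1) and (2) for $L$ now follow from Lemma~\ref{lem:prescribed-residue-fields-global}: for $l \in S_1$, a valuation $w_0$ of $L_0$ above $\F_p[t]$ with residue field $\F_{p^l}$ lifts to a discrete valuation $w$ of $L$ with $Lw = \F_{p^l}$ (automatically above $\F_p[t]$); for $l \in S_2$ and any discrete valuation of $L$ with residue field $\F_{p^m}$, we have $\F_{p^m} = L_0 w_0$ for some valuation $w_0$ of $L_0$, so the required bound $\lcm(l,m) \geq 4l$ transfers directly.

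The main technical point I expect is the identification $\widehat{\K}_v = \widehat{\F_p(t)}_{v_0}$ arising from immediateness, together with the ensuing tensor-product decomposition; this is what reduces the local analysis of $L/\K$ to the arithmetic of $L_0/\F_p(t)$ handled by the Grunwald--Wang construction in the previous lemma. Everything beyond that is routine bookkeeping.
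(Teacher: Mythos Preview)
Your proof is correct and follows the same overall construction as the paper: set $L = L_0 \cdot \K$, use regularity for linear disjointness, and exploit that immediateness of $(\K,v)/(\F_p(t),v_0)$ forces $\widehat{\K}_v = \widehat{\F_p(t)}_{v_0}$.

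There is one organisational difference worth noting. For condition~(2) you first establish an exact bijection between prolongations with \emph{equal} residue fields $Lw = L_0 w_0$, and then invoke Remark~(3) (that $V$ comprises \emph{all} discrete valuations of $\K$) to cover every discrete valuation of $L$. The paper instead argues condition~(2) directly and more cheaply: for any discrete valuation $v$ of $L$, the restriction to $L_0$ has residue field $L_0(v|_{L_0}) \subseteq Lv$, so if $Lv = \F_{p^m}$ and $L_0(v|_{L_0}) = \F_{p^{m_0}}$ then $m_0 \mid m$, whence $4l \le \lcm(l,m_0) \mid \lcm(l,m)$. This needs neither equality of residue fields nor Remark~(3), making it more self-contained (Remark~(3) rests on an additional result of Ershov). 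Your approach, on the other hand, yields the sharper statement that the residue fields of discrete valuations of $L$ coincide exactly with those of $L_0$ above $\F_p[t]$, which is more than is needed here but pleasant to know.
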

\begin{proof}
  Take $L_0/\F_p(t)$ as in Lemma \ref{lem:prescribed-residue-fields-global}, and let $L = \K L_0$ (free compositum, equivalently the tensor product $\K \otimes_{\F_p(t)} L_0$).
  For any discrete valuation $v$ of $L$, the restriction $w$ to $L_0$ is also a discrete valuation and we have the inclusion of residue fields $L_0w \subseteq Lv$.
  Thus condition (2) transfers from $L_0$ to $L$:
  Indeed, if $m_0 = [L_0w : \F_p]$ and $m = [Lv : \F_p]$, we have $m_0 \mid m$ and thus $4l \leq \lcm(l, m_0) \mid \lcm(l, m)$.
  
  On the other hand, for every discrete valuation $w$ of $L_0$ above $\F_p[t]$, the restriction $v_0$ to $\F_p(t)$ is again discrete, and the defining property of $\K$ affords a discrete valuation $v$ on $\K$ such that $(\K,v)/(\F_p(t), v_0)$ is immediate.
  In particular, $\K$ embeds into the completion $\widehat{\F_p(t)}_{v_0}$ over $\F_p(t)$.
  Thus $L$ embeds into the completion $\widehat{L_0}_w$ over $\F_p(t)$ since both $\K$ and $L_0$ have such an embedding and are linearly disjoint over $\F_p(t)$.
  Therefore $L$ carries a discrete valuation above $\F_p[t]$ with residue field $L_0w$.
  Thus condition (1) transfers from $L_0$ to $L$.
\end{proof}

We can now show that $\aleph_0$-saturated elementary extensions $\K^\ast$ of $\K$ have existentially undecidable finite extensions.

Recall (see for instance \cite[Definition 1.6.8]{Soare}) that a set of natural numbers $A$ is \emph{many-one reducible} to a set of natural numbers $B$ if there exists a computable function $f \colon \N \to \N$ such that for any $x \in \N$ we have $f(x) \in B$ if and only if $x \in A$.
This is a formalisation of the notion that membership in $A$ is no harder to decide than membership in $B$.
(A different formalisation is given by Turing reducibility, which is implied by many-one reducibility.)

By fixing a standard Gödel coding, we identify formulae of a given finite first-order language with natural numbers.
In particular, computability-theoretic terms such as decidability and many-one reducibility make sense for sets of formulae.
We generally work in the language of rings $\Lring = \{ +, -, \cdot, 0, 1 \}$.

\begin{theorem}\label{thm:undecidable-finite-extensions}
  Let $S$ be a set of prime numbers.
  Then any $\aleph_0$-saturated elementary extension $\K^\ast$ of $\K$ has a cyclic extension $L/\K^\ast$ of degree $4$ such that $S$ is many-one reducible to the existential theory of $L$.
  In particular, there exist cyclic extensions $L/\K^\ast$ of degree $4$ with undecidable existential theory.
\end{theorem}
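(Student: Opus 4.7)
The plan is to encode membership of $l$ in $S$ into the existence of an $L$-rational point on the test variety $V_l$ from Proposition~\ref{prop:testVariety}, producing $L$ as a single cyclic quartic extension of $\K^\ast$ by realising a carefully chosen partial type.

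For each prime $l > 4$, fix a smooth projective geometrically integral $V_l/\F_p$ as in Proposition~\ref{prop:testVariety} with $n = l$, effectively constructible from $l$ by Remark~\ref{rem:testVarietyComputable}. Parameterise candidate cyclic quartic extensions of $\K^\ast$ by tuples $\bar a = (a_0, a_1, a_2, a_3)$, with $P_{\bar a}(X) = X^4 + a_3 X^3 + a_2 X^2 + a_1 X + a_0$ and $L_{\bar a} = \K^\ast[X]/(P_{\bar a})$. I will use two classes of parameter-free $\Lring$-formulas in $\bar a$: a formula $\phi(\bar a)$ asserting that $P_{\bar a}$ is irreducible and $L_{\bar a}/\K^\ast$ is cyclic Galois of degree four, obtained by quantifying over the three remaining roots of $P_{\bar a}$ in $L_{\bar a}$ and a cyclic generator of the Galois group (each encoded by $\K^\ast$-coordinates on the basis $1, X, X^2, X^3 \bmod P_{\bar a}$); and, for each prime $l > 4$, an existential formula $\psi_l(\bar a)$ asserting $V_l(L_{\bar a}) \neq \emptyset$, obtained by covering $V_l$ by finitely many affine $\F_p$-charts and existentially quantifying over coordinate tuples in $L_{\bar a}$, with $L_{\bar a}$-arithmetic translated polynomially via $P_{\bar a}$.

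The crucial step is finite satisfiability in $\K$ of
\[
\Sigma(\bar a) := \{\phi(\bar a)\} \cup \{\psi_l(\bar a) : l \in S,\ l > 4\} \cup \{\neg \psi_l(\bar a) : l > 4 \text{ prime},\ l \notin S\}.
\]
Given a finite $\Sigma' \subseteq \Sigma$ mentioning primes $S_2' \subseteq S$ and $S_1' \subseteq \{l > 4 \text{ prime}\} \setminus S$, Lemma~\ref{lem:prescribed-residue-fields-wonderful} applied with these inputs produces a cyclic quartic extension $L/\K$ in which $\F_{p^l}$ occurs as a residue field for each $l \in S_1'$, while every residue field $\F_{p^m}$ of $L$ satisfies $\lcm(l, m) \geq 4l$ for each $l \in S_2'$. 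Combining Lemma~\ref{lem:points-finite-ext} with the two clauses of Proposition~\ref{prop:testVariety} (using $l \mid l$ for the first) yields $V_l(L) = \emptyset$ for $l \in S_1'$ and $V_l(L) \neq \emptyset$ for $l \in S_2'$, so a defining tuple for $L$ realises $\Sigma'$ in $\K$, hence in $\K^\ast$ by elementarity. By $\aleph_0$-saturation, some $\bar a^\ast \in (\K^\ast)^4$ realises $\Sigma$, and I set $L := L_{\bar a^\ast}$.

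For the many-one reduction $f$, send each prime $l > 4$ to the existential $\Lring$-sentence asserting that $V_l$ has a rational point (computable from $l$ because $V_l$ is defined over the prime field and effectively given), and fix $f$ on the finitely many remaining inputs (non-primes and primes $\leq 3$) by a finite lookup so that $f(n) \in \mathrm{Th}_\exists(L)$ iff $n \in S$. Then $l \in S$ iff $f(l) \in \mathrm{Th}_\exists(L)$ by construction, giving the reduction; the ``in particular'' follows by choosing any undecidable set $S$ of primes. The main technical effort lies in making the first-order encoding of the arithmetic of $L_{\bar a}$ and of rational points of $V_l$ in terms of $\bar a$ completely explicit, but this is mechanical once the basis $1, X, X^2, X^3 \bmod P_{\bar a}$ is in place.
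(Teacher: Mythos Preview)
Your proof is correct and follows essentially the same approach as the paper: encode membership in $S$ via rational points on the test varieties $V_l$, and use $\aleph_0$-saturation together with Lemma~\ref{lem:prescribed-residue-fields-wonderful}, Lemma~\ref{lem:points-finite-ext}, and Proposition~\ref{prop:testVariety} to realise the resulting type in coefficient tuples of cyclic quartic polynomials. You are simply more explicit than the paper about the first-order encoding of $L_{\bar a}$ and about handling the finitely many small inputs in the many-one reduction, but the substance is identical.
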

\begin{proof}
  For every prime number $l$, let $V_l/\F_p$ be a variety as in Proposition \ref{prop:testVariety} (with $q=p$, $n=l$).
  We claim that we can choose $L$ such that for all primes $l > 4$, we have $V_l(L) \neq \emptyset$ if and only if $l \in S$.
  Since $V_l$ can be computed from $l$ by Remark \ref{rem:testVarietyComputable}, and $V_l(L) \neq \emptyset$ is straightforwardly translated into an existential sentence, this $L$ solves the problem.

  It thus remains to find $L$ satisfying the claim.
  Recasting the search for $L$ as the search for the coefficients of an irreducible polynomial of degree $4$ over $\K^\ast$ with a root generating $L$, saturation reduces us to finding, for every finite set of primes $S_1$ disjoint from $S$ and finite $S_2 \subseteq S$, an extension $L/\K$ of degree $4$ with $V_l(L) = \emptyset$ for $4 < l \in S_1$ and $V_l(L) \neq \emptyset$ for $4 < l \in S_2$.
  This problem is solved by Lemma \ref{lem:prescribed-residue-fields-wonderful}:
  Indeed, the field $L$ produced there satisfies the condition by Lemma \ref{lem:points-finite-ext} and the construction of the $V_l$.

  The ``in particular'' holds because if $S$ is an undecidable set, then the existential theory of $L$ cannot be decidable.
\end{proof}

\begin{remark}\label{rem:countable-instead-of-saturated}
  The passage to an elementary extension of $\K$ is due to the need to realise a certain type, given by the requirements for the coefficient tuple of an irreducible polynomial defining $L$.
  Given that this is only one type, a well-chosen countable elementary extension $\K^\ast$ of $\K$ (depending on $S$) would be sufficient in place of an $\aleph_0$-saturated one.
\end{remark}

\section{Existential decidability}

Let again $p$ be a prime number, $\K/\F_p(t)$ as in the last section.
In this section we prove that the existential theory of $\mathbb K$ is decidable.
\begin{lemma}
  Let $X/\F_p$ be a geometrically integral smooth affine variety.
  Then $X(\K) \neq \emptyset$ if and only if $X(\F_p\pow{s}) \neq \emptyset$.
\end{lemma}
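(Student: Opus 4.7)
The plan is to deduce both directions from the local-global principle (condition~(4) of the proposition defining $\K$), once each henselisation $\K_v$ and its completion have been identified concretely. For $v \in V$ with residue field $\F_{p^m}$, the valued field $(\K,v)$ is equi-characteristic $p$ with value group $\Z$, so Cohen's structure theorem yields $\widehat{\K_v} \cong \F_{p^m}\pow{\pi}$ as valued $\F_p$-algebras, with $\pi$ a uniformiser.

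For the implication $X(\F_p\pow{s}) \neq \emptyset \Rightarrow X(\K) \neq \emptyset$, I will verify the hypothesis of (4) applied to $X_\K$, which is geometrically integral since $X/\F_p$ is. Fix $v \in V$ with residue field $\F_{p^m}$; the inclusion $\F_p \hookrightarrow \F_{p^m}$ combined with the assignment $s \mapsto \pi$ extends to an embedding $\F_p\pow{s} \hookrightarrow \F_{p^m}\pow{\pi} \cong \widehat{\K_v}$, which carries any $\F_p\pow{s}$-point of $X$ to a $\widehat{\K_v}$-point. Since $X_{\K_v}$ is smooth over the henselian valued field $\K_v$, Greenberg's approximation theorem (equivalently, the implicit function theorem for smooth varieties over henselian valued fields) shows that $X(\K_v)$ is dense in $X(\widehat{\K_v})$, and in particular non-empty; any $\K_v$-point of the smooth $X_{\K_v}$ is automatically smooth. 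Condition~(4) then produces $X(\K) \neq \emptyset$.

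The reverse implication $X(\K) \neq \emptyset \Rightarrow X(\F_p\pow{s}) \neq \emptyset$ is almost immediate: pick any $v \in V$ whose residue field is $\F_p$ -- one concrete choice is the prolongation to $\K$ of the $t$-adic valuation of $\F_p(t)$, whose residue field is $\F_p[t]/(t) = \F_p$, and this is preserved on $\K$ by immediacy. The chain of embeddings $\K \hookrightarrow \K_v \hookrightarrow \widehat{\K_v} \cong \F_p\pow{s}$ then sends any $\K$-point of $X$ to an $\F_p\pow{s}$-point.

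The only substantive technical step is the passage from $\widehat{\K_v}$-points back to $\K_v$-points for smooth $X$, which I would handle by Greenberg-style approximation. Everything else is a routine combination of the local-global principle, base-change preservation of smoothness and geometric integrality, and Cohen's structure theorem for complete equi-characteristic discrete valuation rings.
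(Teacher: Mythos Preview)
Your proof is correct and follows the same global strategy as the paper: both directions rest on the local--global principle~(4), and the implication $X(\K)\neq\emptyset \Rightarrow X(\F_p\pow{s})\neq\emptyset$ is obtained exactly as you do, via the embedding $\K\hookrightarrow \widehat{\K_v}\cong\F_p\pow{s}$ at the prolongation of the $t$-adic place.

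For the converse the paper takes a slightly different route. Rather than embedding $\F_p\pow{s}$ into the completion $\widehat{\K_v}$ (via Cohen's theorem) and then descending to the henselisation $\K_v$ by the implicit function theorem for smooth varieties, the paper first descends from $\F_p\pow{s}$ to the henselisation $\F_p(s)_s$ using the fact that these two fields have the same existential $\Lring$-theory (Anscombe--Fehm, or Kuhlmann's tame fields), and then embeds $\F_p(s)_s$ directly into each $\K_v$ by sending $s$ to a uniformiser and invoking the universal property of henselisation. In effect both arguments use the same underlying principle---a henselisation is existentially closed in its completion---but apply it at different nodes of the diagram: you apply it to $\K_v\subseteq\widehat{\K_v}$, the paper to $\F_p(s)_s\subseteq\F_p\pow{s}$. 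Your version stays closer to first principles (multivariate Hensel suffices, given smoothness of $X$) and avoids citing the model-theoretic black box; the paper's version avoids identifying each $\widehat{\K_v}$ and quotes a result that works even without the smoothness hypothesis, though smoothness is needed anyway for condition~(4). A small terminological point: what you actually use is the implicit function theorem (multivariate Hensel) rather than Greenberg's theorem proper, but your parenthetical already says as much.
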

\begin{proof}
  First observe that since $\K$ carries a discrete valuation with residue field $\F_p$ (for instance the prolongation in $V$ of the $t$-adic valuation of $\F_p(t)$), $\K$ embeds into $\F_p\pow{s}$, and therefore the existence of a $\K$-rational point of $X$ implies the existence of an $\F_p\pow{s}$-rational point.

  Suppose conversely that $X$ has an $\F_p\pow{s}$-rational point.
  Then it has a point over the henselisation $\F_p(s)_s$ at the $s$-adic valuation, since the fields $\F_p\pow{s}$ and $\F_p(s)_s$ have the same existential theory by \cite[Corollary 7.2]{AnscombeFehm_existential-old} (or by \cite[Theorem 5.9]{Kuhlmann_tame}).
  Therefore $X$ has a rational point over the henselisation $\K_v$ for every $v$, since every such henselisation embeds $\F_p(s)_s$ (sending $s$ to a uniformiser).
  Now $X(\K) \neq \emptyset$ follows from the local-global principle.
\end{proof}

The following general lemma reduces the existential theory of a field to information about which smooth affine varieties have rational points.
This may well have appeared elsewhere in the literature, but I am unaware of a reference.
As usual, given a field $F$, the language $\Lring(F)$ is simply the expansion of $\Lring$ by constants for the elements of $F$.
In particular, every extension $E/F$ is naturally an $\Lring(F)$-structure.
\begin{lemma}\label{lem:reduction-geom-int-smooth}
  Let $F$ be a field, and $E_1/F$, $E_2/F$ two regular extensions.
  Assume that for every geometrically integral smooth affine $F$-variety $X$ we have $X(E_1) \neq \emptyset$ if and only if $X(E_2) \neq \emptyset$.
  Then the existential $\Lring(F)$-theories of $E_1$ and $E_2$ agree.
\end{lemma}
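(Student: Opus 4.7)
The plan is to reduce the claim to the existence of rational points on locally closed $F$-subschemes of affine space, and then, given an $E_1$-point of such a subscheme, to construct a geometrically integral smooth affine $F$-variety to which the hypothesis can be applied. After putting its quantifier-free matrix into disjunctive normal form, an existential $\Lring(F)$-sentence becomes a finite disjunction of sentences $\exists \bar x \bigl(\bigwedge_i f_i(\bar x) = 0 \wedge \bigwedge_j g_j(\bar x) \neq 0\bigr)$ with $f_i, g_j \in F[\bar x]$, and each such disjunct holds in an extension $E/F$ exactly when the locally closed $F$-subscheme $Z := V(f_1, \dots, f_k) \setminus V(g_1 \cdots g_l) \subseteq \mathbb{A}^n_F$ has an $E$-point. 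Hence it is enough to prove $Z(E_1) \neq \emptyset \Leftrightarrow Z(E_2) \neq \emptyset$ for every such $Z$, and by symmetry only one implication.

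So suppose $P \in Z(E_1)$, corresponding to an $F$-algebra map $\phi_P \colon F[\bar x] \to E_1$, and let $R := \phi_P(F[\bar x]) \subseteq E_1$. Being a subring of a field, $R$ is a finitely generated $F$-subalgebra that is an integral domain, and the fraction field $\mathrm{Frac}(R) \subseteq E_1$ is regular over $F$: separability descends to subextensions, and any element of $\mathrm{Frac}(R)$ algebraic over $F$ lies in $E_1$ and hence in $F$ by regularity of $E_1/F$. Thus $X' := \mathrm{Spec}\,R$ is a geometrically integral affine $F$-variety with separable function field, hence generically smooth. Writing $\bar g_j \in R$ for the image of $g_j$, which is nonzero because $\phi_P(g_j) \neq 0$ in $E_1$ and $R$ embeds into $E_1$, I choose any nonzero $h \in R$ whose vanishing locus contains the non-smooth locus of $X'$---possible since the latter is a proper closed subscheme of the reduced irreducible $X'$---and set
\[
X \;:=\; \mathrm{Spec}\,R\bigl[1/(h\,\bar g_1 \cdots \bar g_l)\bigr].
\]
Then $X$ is a smooth geometrically integral affine $F$-variety whose natural locally closed immersion into $\mathbb{A}^n_F$ lands in $Z$ (the $f_i$ vanish on $X'$, and the $g_j$ have been inverted on $X$). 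Since each denominator maps to a nonzero element of the field $E_1$, the embedding $R \hookrightarrow E_1$ extends to $R[1/(h\bar g_1 \cdots \bar g_l)] \hookrightarrow E_1$, so $P$ descends to an $E_1$-point of $X$.

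Applying the hypothesis to $X$ yields $X(E_2) \neq \emptyset$, and any such point gives an $E_2$-point of $Z$ via the inclusion $X \hookrightarrow Z$. The delicate step is the construction of $X$: from the subring $R \subseteq E_1$ one must extract a smooth geometrically integral affine witness inside $Z$ whose $E_1$-rational points include $P$. The key algebraic inputs are that $\mathrm{Frac}(R)/F$ is a regular field extension (so that $X'$ is a geometrically integral $F$-variety) and that a finite-type affine $F$-scheme with separable function field is generically smooth (so that the further localization $X$ is smooth over $F$).
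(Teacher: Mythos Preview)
Your proof is correct, and it takes a genuinely different route from the paper's. The paper first eliminates inequalities (so works only with closed subvarieties of affine space), then decomposes an arbitrary affine variety top-down into integral regular locally closed pieces; the non-geometrically-integral pieces are dispatched by arguing that their base change to $E_i$ remains integral and regular but has no rational point, while for the geometrically integral pieces one uses that any $E_i$-point of a regular variety is smooth (since its residue field, being a subfield of $E_i$, is separable over $F$), so lies in the smooth locus, to which the hypothesis applies. You instead work bottom-up from a chosen point $P\in Z(E_1)$: its schematic image $X'=\operatorname{Spec}\phi_P(F[\bar x])$ is automatically geometrically integral because $\operatorname{Frac}(R)$ sits inside the regular extension $E_1$, so no separate treatment of the non-geometrically-integral case is needed, and a single localisation cuts down to a smooth affine witness inside $Z$. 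Your argument is somewhat more elementary---it avoids the stratification into regular pieces and the cited EGA results on base change of regularity and on regular points with separable residue field being smooth---while the paper's is more structural, giving a uniform decomposition of $Z$ independent of the chosen point. Both approaches ultimately rest on the same principle, that rational points over a regular extension live on geometrically integral subvarieties and at smooth points thereof.
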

\begin{proof}
  By standard reductions (disjunctive normal form, elimination of inequalities) it suffices to show that for any $f_1, \dotsc, f_k \in F[X_1, \dotsc, X_n]$, the $f_i$ have a common zero in $E_1$ if and only if they have a common zero in $E_2$.
  In other words, we must show that for every affine $F$-variety $X$ we have $X(E_1) \neq \emptyset$ if and only if $X(E_2) \neq \emptyset$.

  By passing to the reduction of $X$ is necessary, and using that for every reduced variety the regular locus is open and not empty \cite[Corollary 12.52(2)]{GoertzWedhorn}, we can write $X$ as a union of finitely many regular integral affine locally closed subvarieties.
  In other words, it suffices to consider integral regular affine $X$.

  If $X$ is not geometrically integral, then $X(E_1) = \emptyset = X(E_2)$:
  Indeed, the base-changed varieties $X_{E_1}/E_1$ and $X_{E_2}/E_2$ are regular \cite[Proposition 6.7.4]{EGA-IV-2}, integral \cite[Corollary 5.56(3)]{GoertzWedhorn}, but not geometrically integral, and therefore they have no rational points (see for instance \cite[Lemma 10.1]{Poonen_smooth-projective}).

  Thus let us assume that $X$ is geometrically integral.
  Then the smooth locus $X_{\mathrm{sm}} \subseteq X$ is dense open \cite[Theorem 6.19, Remark 6.20(ii)]{GoertzWedhorn}.
  Any $E_1$-rational point on $X$ is necessarily smooth \cite[Proposition 17.15.1]{EGA-IV-4}, i.e.\ is an $E_1$-rational point on the geometrically integral smooth variety $X_{\mathrm{sm}}$.
  By the assumption applied to the open affine subvarieties of $X_{\mathrm{sm}}$, we must then also have an $E_2$-rational point on $X_{\mathrm{sm}}$ and therefore on $X$.
  By symmetry, this shows that $X(E_1) \neq \emptyset$ if and only if $X(E_2) \neq \emptyset$, as desired.
\end{proof}

\begin{proposition}
  The existential theory of $\K$ agrees with the existential theory of $\F_p\pow{s}$.
  In particular, it is decidable.
\end{proposition}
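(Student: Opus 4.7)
The plan is to invoke Lemma \ref{lem:reduction-geom-int-smooth} with $F = \F_p$, $E_1 = \K$, and $E_2 = \F_p\pow{s}$, using the preceding lemma as precisely the hypothesis of that reduction. The bulk of the work has already been done; what remains is to check that both extensions are regular over $\F_p$, assemble the pieces, and cite a known decidability result for the Laurent series field.

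First I would verify regularity. The extension $\K/\F_p(t)$ is regular by construction of $\K$, and $\F_p(t)/\F_p$ is regular because $\F_p$ is perfect and the extension is purely transcendental; composing, $\K/\F_p$ is regular. For $\F_p\pow{s}/\F_p$, separability is automatic since $\F_p$ is perfect, and I would check that $\F_p$ is algebraically closed in $\F_p\pow{s}$: any element algebraic over $\F_p$ has non-negative $s$-adic valuation, hence lies in $\F_p[[s]]$, and its residue in $\F_p$ is a simple root of its minimal polynomial (separability over the perfect field $\F_p$), so Hensel's lemma identifies the element with the unique lift of this residue, placing it in $\F_p$.

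Next, the preceding lemma states exactly that for every geometrically integral smooth affine $\F_p$-variety $X$, we have $X(\K) \neq \emptyset$ iff $X(\F_p\pow{s}) \neq \emptyset$. Hence Lemma \ref{lem:reduction-geom-int-smooth} applies and yields that the existential $\Lring(\F_p)$-theories of $\K$ and $\F_p\pow{s}$ coincide; in particular, the existential $\Lring$-theories coincide.

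The ``in particular'' then follows because the existential theory of $\F_p\pow{s}$ is known to be decidable, e.g.\ by \cite[Corollary 7.2]{AnscombeFehm_existential-old} together with the fact that the henselian discretely valued field $\F_p(s)_s$ with residue field $\F_p$ has decidable existential theory. I do not anticipate a real obstacle here: the main content is packaged into the two previous lemmas, and the only thing to be careful about is the regularity check for $\F_p\pow{s}/\F_p$, which is where a reader might otherwise stumble.
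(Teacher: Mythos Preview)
Your proposal is correct and follows essentially the same route as the paper: apply Lemma~\ref{lem:reduction-geom-int-smooth} with $F=\F_p$, $E_1=\K$, $E_2=\F_p\pow{s}$, feeding in the preceding lemma as the hypothesis, and then cite decidability for $\F_p\pow{s}$. The only cosmetic differences are that you spell out the regularity checks (which the paper leaves implicit) and that the paper cites \cite[Corollary~7.5]{AnscombeFehm_existential-old} directly for decidability rather than routing through Corollary~7.2.
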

\begin{proof}
  The first statement follows from the two preceding lemmas (take $F=\F_p$, $E_1 = \K$, $E_2 = \F_p\pow{s}$).
  The ``in particular'' is \cite[Corollary 7.5]{AnscombeFehm_existential-old}.
\end{proof}

\begin{corollary}\label{cor:quadratic-pair}
  There exists an existentially decidable field $K$ of characteristic $p$ with an existentially undecidable separable quadratic extension.
  We can choose $K$ such that the relative algebraic closure of $\F_p$ in $K$ is finite.
\end{corollary}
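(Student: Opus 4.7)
The plan is to extract a quadratic extension from the degree-$4$ extension produced by Theorem~\ref{thm:undecidable-finite-extensions}, and conclude by a dichotomy argument. Concretely: fix an undecidable set of primes $S$ and, invoking Remark~\ref{rem:countable-instead-of-saturated}, produce a countable elementary extension $\K^\ast$ of $\K$ together with a cyclic degree-$4$ extension $L/\K^\ast$ whose existential theory is undecidable. Since $\K^\ast \equiv \K$ and the preceding proposition identifies the existential theory of $\K$ with that of $\F_p\pow{s}$ (decidable), the field $\K^\ast$ is itself existentially decidable. Let $M$ be the unique intermediate field with $[M : \K^\ast] = 2$, separable over $\K^\ast$ since $L/\K^\ast$ is Galois.

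I would then split on whether $M$ is existentially decidable. If it is, I set $K := M$ and use the quadratic extension $L/M$; otherwise I set $K := \K^\ast$ and use the quadratic extension $M/\K^\ast$. Either choice gives an existentially decidable field together with a separable quadratic extension whose existential theory is undecidable, as required. The dichotomy is non-effective (one does not know in advance which side holds), but this is enough for the pure existence claim.

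For the finiteness of the relative algebraic closure, I would argue by elementary equivalence. Regularity of $\K/\F_p(t)$ gives $\K \cap \overline{\F_p} = \F_p$; for each $n \geq 2$ the non-containment of a copy of $\F_{p^n}$ is first-order expressible in $\Lring$ (the polynomial $X^{p^n}-X$ fails to split), so $\K^\ast \cap \overline{\F_p} = \F_p$ as well. Hence $\K^\ast$ is regular over $\F_p$, with separability supplied by the perfection of $\F_p$, and so $\K^\ast$ and $\overline{\F_p}$ are linearly disjoint over $\F_p$. This bounds $[K' \cap \overline{\F_p} : \F_p] \leq [K' : \K^\ast]$ for any finite extension $K'/\K^\ast$, and in particular for our $K$, which has degree $1$ or $2$ over $\K^\ast$.

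The only point where one could hope to do more is the existential decidability of $M$: it would be cleaner to show it directly, perhaps by reducing the existential theory of $M$ to a parametrised existential theory of $\K^\ast$ via an explicit basis of $M/\K^\ast$. Without such control over parameters I would fall back on the dichotomy, which is the main (and entirely routine) conceptual move in the proof.
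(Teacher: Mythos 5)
Your proposal is correct and follows the paper's own proof essentially verbatim: take the cyclic degree-$4$ extension $L/\K^\ast$ from Theorem~\ref{thm:undecidable-finite-extensions}, pass to the quadratic intermediate field, and apply the non-effective dichotomy; the paper also uses regularity of $\K/\F_p(t)$ together with elementary equivalence to bound the relative algebraic closure of $\F_p$. The only cosmetic deviation is that you invoke Remark~\ref{rem:countable-instead-of-saturated} to use a countable elementary extension in place of the $\aleph_0$-saturated one used in the paper's proof, which makes no difference to the argument.
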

\begin{proof}
  Let $\K^\ast$ be an $\aleph_0$-saturated elementary extension of $\K$, and let $L/\K^\ast$ be a cyclic extension of degree $4$ which is existentially undecidable (Theorem \ref{thm:undecidable-finite-extensions}).
  Let $L_0/\K^\ast$ be the unique quadratic intermediate field.
  Since $\K$ is regular over $\F_p(t)$, the prime field $\F_p$ is relatively algebraically closed in $\K$ and thus in $\K^\ast$.
  Hence the relative algebraic closure of $\F_p$ in $L$ is finite.
  Since $\K^\ast$ is existentially decidable as $\K$ is, either $L_0/\K^\ast$ or $L/L_0$ is a pair of fields as desired.
\end{proof}

This proves Theorem \ref{thm:intro-Kesavan} from the introduction.
As mentioned previously, the analogue in characteristic $0$ was established in \cite[Theorem 3.3.1]{Kesavan}, with a similar technique.
There the full first-order theory of the base field is decidable, so the result is stronger than ours (inspection of the proof yields that the quadratic extension still has undecidable existential theory).
Decidability of the full first-order theory seems out of reach in positive characteristic with the current method, as our understanding of the model theory of valued fields is insufficient.

Conditionally on a conjecture related to resolution of singularities, we can establish a slightly stronger decidability result in the language $\Lring(\F_p(t))$.
Here we fix a natural coding of $\F_p(t)$ (specifically, a coding witnessing the computability of the field $\F_p(t)$) to identify $\Lring(\F_p(t))$-formulae with natural numbers.
Since every element of $\F_p(t)$ is quantifier-freely $\Lring$-definable in terms of the constant $t$, instead of $\Lring(\F_p(t))$ we could equivalently work in the expansion of $\Lring$ by a single constant symbol for $t$.
\begin{lemma}
  Assume the consequence (R4) of local uniformisation from \cite{ADF_existential}.
  Then there is an algorithm which, given as input $k, n > 0$ and polynomials $f_1, \dotsc, f_k \in \F_p(t)[X_1, \dotsc, X_n]$ such that the affine variety described by the $f_i$ is geometrically integral and smooth over $\F_p(t)$, decides whether the variety has a $\K$-rational point, i.e.\ whether the $f_i$ have a common zero in $\K$.
\end{lemma}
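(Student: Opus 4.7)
The plan is to use the local-global principle for $\K$ to reduce the question to a collection of local problems over completions of $\F_p(t)$, then to prune this collection to a finite effective list that can be resolved by (R4).

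First, since $X$ is smooth and geometrically integral over $\F_p(t)$, the base change $X_\K := X \times_{\F_p(t)} \K$ is smooth geometrically integral affine over $\K$. Any rational point of a smooth variety is smooth, so condition (4) of the defining proposition of $\K$ gives $X(\K) = X_\K(\K) \neq \emptyset$ if and only if $X_\K(\K_v) \neq \emptyset$ for every $v \in V$. For $v \in V$ with restriction $v_0$ to $\F_p(t)$ corresponding to a monic irreducible $\pi \in \F_p[t]$, condition (1) says the extension $(\K,v)/(\F_p(t),v_0)$ is immediate; since both valued fields are discrete they share the common completion $\F_{p^{\deg \pi}}\pow{\pi}$. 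For smooth varieties, rational points over a henselian discretely valued field agree with rational points over its completion (by the implicit function theorem/Hensel's lemma at a smooth point). Thus $X_\K(\K_v) \neq \emptyset$ if and only if $X(\F_{p^{\deg \pi}}\pow{\pi}) \neq \emptyset$, so the algorithm must decide whether $X$ acquires a point over each completion of $\F_p(t)$ indexed by the monic irreducibles $\pi \in \F_p[t]$.

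Second, I would show that only finitely many such $\pi$ need to be inspected. From the input effectively compute a finite set $\Sigma \subseteq \F_p[t]$ of monic irreducibles containing every $\pi$ for which the reduction of $X$ modulo $\pi$ fails to be smooth and geometrically integral over $\F_{p^{\deg \pi}}$ (for instance by clearing denominators to obtain a model over $\F_p[t]$ and intersecting with the vanishing loci of appropriate Jacobian minors and discriminants). For $\pi \notin \Sigma$, the reduction $\bar{X}$ is smooth and geometrically integral over $\F_{p^{\deg \pi}}$, so by the effective Lang--Weil bound (with constants computable from the projective degree and dimension of $X$) there is an effectively computable bound $N$ such that $\bar{X}(\F_{p^{\deg \pi}}) \neq \emptyset$ whenever $\deg \pi \geq N$. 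Any such residue point lifts by Hensel's lemma to an $\F_{p^{\deg \pi}}\pow{\pi}$-point of $X$. Hence only the finitely many $\pi$ in $\Sigma$ or of degree less than $N$ remain to be examined.

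Third, for each of these finitely many remaining $\pi$, the hypothesis (R4) from \cite{ADF_existential} supplies an algorithm that, given the defining equations of $X$ and the parameter $\pi$, decides whether $X(\F_{p^{\deg \pi}}\pow{\pi}) \neq \emptyset$. Combining these finitely many decisions yields the desired procedure. The principal obstacle is step two: making the Lang--Weil bound effective with constants extracted directly from the defining $f_i$ and effectively producing the finite set $\Sigma$ of places of bad reduction uniformly in the input; these ingredients are standard but require some careful bookkeeping to assemble into an algorithm.
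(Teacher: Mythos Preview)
Your proposal is correct and follows essentially the same strategy as the paper: reduce to local problems via the local-global principle, dispose of all but finitely many places by an effective Lang--Weil bound plus Hensel's Lemma, and handle the remaining places using (R4) from \cite{ADF_existential}. The only cosmetic difference is that you route the local problem through the completion $\F_{p^{\deg\pi}}\pow{\pi}$ (using that smooth points over the completion descend to the henselisation), whereas the paper works directly with the henselisation $\K_v$ and invokes \cite[Theorem 5.9]{Kuhlmann_tame} and \cite[Theorem 4.12]{ADF_existential}; both formulations are equivalent here.
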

\begin{proof}
  Let $V_0$ be the set of discrete valuations of $\F_p(t)$ above $\F_p[t]$.
  By \cite[Remark 7.7.3]{Poonen} (a combination of the Lang--Weil bounds and Hensel's Lemma) one can effectively determine a finite subset $S \subseteq V_0$ such that for all $v \in V_0 \setminus S$ the $f_i$ have a common zero in the completion $\widehat{\F_p(t)}_v$,
  and therefore in the henselisation $\F_p(t)_v$ since $\F_p(t)_v$ is existentially closed in $\widehat{\F_p(t)}_v$ \cite[Theorem 5.9]{Kuhlmann_tame}.

  In order to decide whether the $f_i$ have a common zero in $\K$, by the local-global principle it therefore suffices to decide whether they have a common zero in the henselisation $\K_v$ for each discrete valuation $v$ of $\K$ above one of the valuations in $S$.
  The decidability of this problem (under the assumption (R4)) follows from \cite[Theorem 4.12]{ADF_existential}.
  Indeed, for each such $v$, the henselisation $\K_v$ (with the canonical valuation) is an immediate extension of $\F_p(t)$ (with the restricted valuation), and therefore its universal/existential $\Lring(\F_p(t))$-theory is formally entailed by the first-order axioms expressing that it is a henselian valued field extending $(\F_p(t), v|_{\F_p(t)})$ with the same residue field and uniformiser.
\end{proof}

\begin{proposition}
  Assume (R4).
  Then the existential $\Lring(\F_p(t))$-theory of $\K$ is decidable.
\end{proposition}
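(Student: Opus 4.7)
The plan is to make the proof of Lemma~\ref{lem:reduction-geom-int-smooth} effective, so that the question reduces to finitely many applications of the preceding lemma. First, by standard reductions (disjunctive normal form, and replacing $g \ne 0$ by $\exists y\,(gy = 1)$), it suffices to decide, given $f_1, \dotsc, f_k \in \F_p(t)[X_1, \dotsc, X_n]$, whether the affine $\F_p(t)$-variety $X = V(f_1, \dotsc, f_k)$ has a $\K$-rational point. Since $\F_p(t)$ is a computable field, this reduction is algorithmic.

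Next, I would effectively stratify $X$ into regular integral affine locally closed $\F_p(t)$-subvarieties $Y_1, \dotsc, Y_r$ with $X(\K) = \bigcup_j Y_j(\K)$, mirroring the first step of the proof of Lemma~\ref{lem:reduction-geom-int-smooth}. Since $\F_p(t)$-algebras of finite type are excellent, the regular locus of each reduced scheme in sight is open and dense; starting from $X_{\mathrm{red}}$, computing this regular locus, splitting into irreducible components, and recursing on the closed complement of strictly smaller dimension yields the stratification in finitely many steps. Standard effective commutative algebra over the computable field $\F_p(t)$---Gröbner bases, ideal operations, computation of singular loci via Kähler differentials---makes each step algorithmic.

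For each $Y_j$, I would then test geometric integrality by checking whether $\F_p(t)(Y_j)/\F_p(t)$ is a regular field extension, i.e.\ separable with $\F_p(t)$ relatively algebraically closed; both conditions are effectively decidable over the computable base. If $Y_j$ fails this test, then $Y_j(\K) = \emptyset$ by the argument of Lemma~\ref{lem:reduction-geom-int-smooth}, using that $\K/\F_p(t)$ is regular. Otherwise I would compute the dense open smooth locus $Y_{j,\mathrm{sm}}$ via the Jacobian criterion; since $Y_j$ is everywhere regular and the residue field at the image of any $\K$-point embeds into $\K$ and is therefore separable over $\F_p(t)$, every $\K$-rational point of $Y_j$ lies in $Y_{j,\mathrm{sm}}$, so $Y_j(\K) = Y_{j,\mathrm{sm}}(\K)$. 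Covering $Y_{j,\mathrm{sm}}$ by finitely many geometrically integral smooth principal affine opens $U_{j,\alpha}$ and applying the preceding lemma to each decides whether $U_{j,\alpha}(\K) \ne \emptyset$; a Boolean combination of these answers yields the decision.

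The main obstacle is technical rather than conceptual: verifying that the regular-locus computation, the geometric-integrality test, and the smooth-locus computation are all algorithmic over the imperfect field $\F_p(t)$. The Jacobian-criterion treatment of smooth loci and the effective handling of regularity of finitely generated field extensions are classical, while computing the regular locus over an imperfect base is the subtlest ingredient, though still effective (e.g.\ via Fitting ideals of $\Omega_{Y/\F_p(t)}$ together with further stratification of the non-smooth locus). Once the stratification is in place, the proposition reduces to finitely many invocations of the preceding lemma, which itself depends on (R4).
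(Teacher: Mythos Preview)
Your approach is correct in outline but takes a different route from the paper's. You make the stratification of Lemma~\ref{lem:reduction-geom-int-smooth} effective and then invoke the preceding lemma on each smooth geometrically integral stratum. The paper instead avoids effectivising the stratification altogether: it writes down a computably enumerable $\Lring(\F_p(t))$-theory $T$ (the field axioms, the quantifier-free diagram of $\F_p(t)$, the assertion that each irreducible $f\in\F_p(t)[X]$ has no root, and for every smooth geometrically integral affine $\F_p(t)$-variety an axiom recording whether or not it has a $\K$-point, the latter decided by the preceding lemma), observes via Lemma~\ref{lem:reduction-geom-int-smooth} that $T$ is complete for existential sentences, and concludes by running a proof calculus. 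Thus the paper uses Lemma~\ref{lem:reduction-geom-int-smooth} as a black box on the semantic side rather than re-deriving it effectively, which is shorter and sidesteps all the effective-commutative-algebra issues you flag.

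One technical point in your argument deserves care. Fitting ideals of $\Omega_{Y/\F_p(t)}$ compute the non-\emph{smooth} locus, not the non-\emph{regular} locus, and over the imperfect field $\F_p(t)$ these differ; for instance $\mathrm{Spec}\,\F_p(t)[Y]/(Y^p-t)$ is regular but nowhere smooth, so a recursion based on removing smooth loci need not make progress. A clean fix is to spread out: present your $\F_p(t)$-algebra as the generic fibre of a finitely generated $\F_p[t]$-algebra $B$, compute the regular (equivalently smooth, since $\F_p$ is perfect) locus of $\mathrm{Spec}(B)$ as an $\F_p$-variety via the Jacobian criterion, and restrict to the generic fibre; regularity is absolute and is preserved under localisation, so this yields the regular locus over $\F_p(t)$. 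With this adjustment your direct approach goes through.
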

\begin{proof}
  Consider the $\Lring(\F_p(t))$-theory $T$ given by the following system of axioms:
  \begin{enumerate}
  \item the field axioms;
  \item the quantifier-free diagram of $\F_p(t)$;
  \item for each irreducible polynomial $f \in \F_p(t)[X]$ the sentence $\forall x (f(x) \neq 0)$;
  \item for any finite list of polynomials $f_1, \dotsc, f_k \in \F_p(t)[X_1, \dotsc, X_n]$ describing a geometrically integral smooth affine $\F_p(t)$-variety, an axiom asserting that the $f_i$ have a common zero if this is the case in $\K$, and otherwise an axiom asserting that they do not have a common zero.
  \end{enumerate}
  We claim that this system of axioms is computably enumerable.
  This is clear for the field axioms, follows from the computability of $\F_p(t)$ for the quantifier-free diagram, and from the existence of a splitting algorithm for $\F_p(t)$ for the third point.
  For the fourth point, this is essentially the preceding lemma and the observation that it is decidable whether a system of polynomials defines a geometrically integral smooth variety (for instance by Gröbner basis techniques and the Jacobian criterion).

  The models of $T$ are field extensions $E$ of $\F_p(t)$ in which $\F_p(t)$ is relatively algebraically closed, i.e.\ which are regular over $\F_p(t)$, and such that the same geometrically integral smooth affine $\F_p(t)$-varieties have rational points in $E$ as in $\K$.
  By Lemma \ref{lem:reduction-geom-int-smooth}, the theory $T$ is therefore complete for universal and existential $\F_p(t)$-sentences, i.e.\ for any existential $\F_p(t)$-sentence, $T$ entails either the sentence or its negation.
  A proof calculus therefore gives a decision procedure for existential consequences of $T$, which proves the claim since $\K \models T$.
\end{proof}

\section{An existentially undecidable complete valued field}

Let $p$ be a fixed prime.
We prove the following (stated as Theorem \ref{thm:intro-valued} in the introduction):
\begin{theorem}\label{thm:valued-example}
  There exists a complete discretely valued field $(E, v)$ with $\kar E = 0$, $\kar Ev = p$, such that the existential theory of $Ev$ is decidable, but the existential theory of $E$ is undecidable.
  We can furthermore choose $E$ such that the set of one-variable polynomials in $\Q[X]$ with a zero in $E$ is decidable.
\end{theorem}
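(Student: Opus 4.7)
The plan combines Corollary \ref{cor:quadratic-pair} with a mixed-characteristic Cohen-ring construction. Take $K \subset L_1$ as provided by that corollary: $K$ is existentially decidable of characteristic $p$, $L_1/K$ is a separable quadratic extension with $L_1$ existentially undecidable, and the relative algebraic closure of $\F_p$ in $K$ is finite. For definiteness assume $p$ is odd, so $L_1 = K(\sqrt{c})$ for some $c \in K^\times \setminus (K^\times)^2$ (the case $p=2$ being analogous with Artin--Schreier in place of Kummer). Let $E_0$ be a Cohen ring for $K$ with $p$ as uniformiser, pick a lift $C \in \mathcal{O}_{E_0}^\times$ of $c$, and define
$$E := E_0\bigl(\sqrt{pC}\bigr);$$
this is independent of the choice of lift, since $1 + p\mathcal{O}_{E_0}$ consists of squares. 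Then $E$ is a complete discretely valued field of characteristic $0$ and residue characteristic $p$, with residue field $K$ and uniformiser $\pi := \sqrt{pC}$, so that $v(p) = 2$.

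To establish existential undecidability of $E$, I would build a many-one reduction from the existential theory of $L_1$. The key point is that for any uniformiser $\pi'$ of $E$ the residue of $\pi'^2/p$ lies in the square class $c(K^\times)^2 \subseteq K^\times$, and conversely every element of that class so arises. Given an existential $\Lring$-sentence $\varphi$ of $L_1$, use the decomposition $p_j(X+Y\sqrt{S}) = G_j(X,Y,S) + H_j(X,Y,S)\sqrt{S}$ (writing each variable as $u_i + w_i\sqrt{c}$) to translate $\varphi$ into an existential $\Lring$-sentence $\varphi^\ast$ of $E$, which existentially quantifies a uniformiser $\pi'$ and valuation-ring tuples $\vec u, \vec w$ and demands $G_j(\vec u, \vec w, \pi'^2/p), H_j(\vec u, \vec w, \pi'^2/p) \in \mathfrak{m}_E$ for each $j$. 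Existential definability of $\mathcal{O}_E$ and $\mathfrak{m}_E$ in $E$ (standard for complete mixed-characteristic DVFs, cf.\ work of Anscombe--Fehm) renders $\varphi^\ast$ a genuine existential sentence computable from $\varphi$. Unwinding, $E \models \varphi^\ast$ iff for some $c' \in c(K^\times)^2$ and some tuple in $K$ the $K$-equations hold; rescaling via $\sqrt{c'} = t\sqrt{c}$ shows this is equivalent to $L_1 \models \varphi$, giving the reduction.

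For the decidability of $\Q[X]$-polynomial root-existence in $E$, I would describe $\overline{\Q} \cap E$ explicitly. Since $v(p) = 2$, any finite extension of $\Q_p$ inside $E$ has ramification at most $2$; using the description $(E^\times)^2 \cap E_0 = (E_0^\times)^2 \cup pC \cdot (E_0^\times)^2$ one checks that neither $\sqrt{p}$ nor $\sqrt{up}$ (for $u \in \Z_p^\times$) lies in $E$, since that would force $c \in \F_p^\times(K^\times)^2$ and make $L_1$ a constant extension of $K$, which would be existentially decidable, contrary to assumption. Thus $\overline{\Q_p} \cap E$ is the unramified extension of $\Q_p$ whose residue field equals the finite relative algebraic closure of $\F_p$ in $K$, and $\overline{\Q} \cap E = (\overline{\Q} \cap \Q_p)(\zeta)$ for a suitable root of unity $\zeta$ --- a computable algebraic extension of $\Q$. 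Hence root existence in $E$ for $\Q[X]$-polynomials is decidable, while $Ev = K$ is existentially decidable by Corollary \ref{cor:quadratic-pair}. The main obstacle is the detailed execution of the translation in step two, in particular the handling of the square-class ambiguity together with the existential definition of the valuation ring.
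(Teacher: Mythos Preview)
Your construction of $E$ coincides with the paper's (for odd $p$ your $C$ plays the role of the paper's $1+4b$), but the two sub-arguments are carried out differently. For undecidability, the paper avoids your direct reduction entirely by passing to $E(\sqrt p)$: this field contains $\sqrt C$ and hence has residue field $L_1$, which is then existentially interpretable in $E(\sqrt p)$ via the parameter-free existential definitions of $\mathcal O_v$ and $\mathfrak m_v$; finally $E(\sqrt p)$ is quantifier-freely interpretable in $E$. This sidesteps the square-class bookkeeping that you flag as the ``main obstacle''. For the algebraic part, your attempt to identify $\overline{\Q_p}\cap E$ explicitly is unnecessary and, as written, slightly incomplete: you rule out $\sqrt{up}$ only for $u\in\Z_p^\times$, whereas if the relative algebraic closure of $\F_p$ in $K$ is $\F_{p^r}$ with $r>1$ one must exclude $u$ ranging over $\Z_{p^r}^\times$, and then argue that $K\cdot\F_{p^{2r}}$ is existentially decidable. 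The paper instead simply observes that finiteness of $\overline{\F_p}\cap K$ forces $\overline{\Q_p}\cap F$, and hence $\overline{\Q_p}\cap E$, to be a finite extension of $\Q_p$ by the fundamental equality; whatever local field this is, its theory is decidable, and no explicit identification is needed since the theorem is an existence statement. Lastly, the paper treats all $p$ uniformly by writing $L=K(\alpha)$ with $\alpha^2-\alpha\in K$ and setting $E=F(\sqrt{p(1+4b)})$, so that $E(\sqrt p)$ acquires a root of $X^2-X-b$; your ``analogous with Artin--Schreier'' remark for $p=2$ would genuinely require a different construction, so the paper's device is worth adopting.
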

\begin{proof}
  By Corollary \ref{cor:quadratic-pair}, we may select
  an existentially decidable field $K$ of characteristic $p$ with an existentially undecidable separable quadratic extension $L$,
  such that furthermore the relative algebraic closure of $\F_p$ in $K$ is finite.
  
  There is an element $\alpha \in L$ with $L = K(\alpha)$ and $a := \alpha^2 - \alpha \in K$.
  (We use this equation instead of $a = \alpha^2$ to handle all characteristics simultaneously.)
  
  Let $(F, v)$ be the unique complete discretely valued field of characteristic $0$ with residue field $K$ and uniformiser $p$.
  (See for instance \cite[Theorem 2.10 and Corollary 6.6]{AnscombeJahnke_Cohen} for the (classical) existence and uniqueness of such $(F,v)$ in terms of the valuation ring, although we do not in fact need the uniqueness.)
  Let $b \in F$ be a lift of $a$, and  
  set $E = F(\sqrt{p (1 + 4b)})$.
  We continue to write $v$ for the unique prolongation to finite extensions of $F$, in particular to $E$.

  We claim that $(E, v)$ is as desired.
  Note first that $v(1+4b) = 0$:
  this is clear if $p=2$, and holds for odd $p$ since otherwise $a = -1/4$ and so the polynomial $X^2 - X - a$ would be reducible in $K$.
  Therefore the extension $E$ is obtained by adjoining to $F$ a square root of the uniformiser $p(1+4b)$, and is thus totally ramified.
  In particular $Ev = Fv = K$, which is existentially decidable.

  On the other hand, the field $E(\sqrt{p})$ contains the element $\sqrt{1 + 4b}$, and therefore a root of the polynomial $X^2 - X - b$, so the residue field $E(\sqrt{p})v$ must be $K(\alpha) = L$.
  In the complete discretely valued field $(E(\sqrt{p}), v)$ both the valuation ring $\mathcal{O}_v$ and its maximal ideal $\mathfrak{m}_v$ are existentially $\Lring$-definable (without parameters), since for a natural number $n > 2$ coprime to $p$ a well-known application of Hensel's Lemma shows that
  \[ \mathcal{O}_v = \{ x \in E(\sqrt{p}) \colon \exists y (1 + px^n = y^n) \}, \quad \mathfrak{m}_v = \{ x \in E(\sqrt{p}) \colon \exists y (1 + x^n/p = y^n)\} .\]
  Therefore the residue field $L$ is (parameter-freely) existentially interpretable in $E(\sqrt{p})$ (i.e.\ we have an interpretation satisfying the property of \cite[Theorem 5.3.2, Remark 3]{Hodges_longer}), so the existential $\Lring$-theory of $E(\sqrt{p})$ is undecidable.
  (See \cite[Theorem 5.3.2, Remark 4]{Hodges_longer} for generalities on transfer of decidability under interpretations.)
  Consequently, the existential $\Lring$-theory of $E$ is likewise undecidable, since $E(\sqrt{p})$ is quantifier-freely interpretable in $E$.

  Lastly, consider the subfield $\Q_p \subseteq F$ (given as the topological closure of the subfield $\Q$).
  Since the relative algebraic closure of $\F_p$ in $Fv = K$ is finite and $(F,v)$ has uniformiser $p$, the fundamental equality for algebraic extensions of $\Q_p$ (see for instance \cite[Proposition 1.4.6]{Ershov}) shows that the relative algebraic closure of $\Q_p$ in $F$ is a finite extension of $\Q_p$, and therefore the same holds in $E$.
  Thus the algebraic part of $E$ is the same as the algebraic part of a local field of characteristic zero.
  The local fields of characteristic zero have decidable first-order theory \cite[Corollary 5.3]{PrestelRoquette}, so in particular it is decidable whether a given polynomial in $\Q[X]$ has a zero in $E$.
\end{proof}

\begin{remark}\label{rem:decidable-algebraic-part}
  The condition that the set of polynomials in $\Q[X]$ with a zero in $E$ be decidable is occasionally phrased as $E$ having ``decidable algebraic part'', in the sense that it allows to decide which elements of an algebraic closure of $\Q$ lie in $E$ (up to conjugacy).
  There is however a certain ambiguity in this expression, as it may also be understood to assert that the field $E \cap \overline{\Q}$ is decidable, i.e.\ has decidable full first-order theory, which is a stronger condition.
  Our proof of Theorem \ref{thm:valued-example} shows that even this stronger condition is satisfied, since the algebraic part of a local field is an elementary substructure and therefore shares its (decidable) first-order theory \cite[Theorem 3.4 and Theorem 5.1]{PrestelRoquette}.
\end{remark}

\begin{remark}\label{rem:question-AF}
  In \cite[Remark 7.6]{AnscombeFehm_existential-old} it was asked whether there exists an existentially undecidable henselian valued field of mixed characteristic with existentially decidable residue field and pointed value group (i.e.\ value group with a constant for the value of $p$).
  Theorem \ref{thm:valued-example} provides an example for this phenomenon, as even the full first-order theory of the value group $\Z$ is decidable \cite[Theorem 3.3.8]{Hodges_longer} (and expanding by a constant symbol for $v(p)$ does not change this, since any constant in $\Z$ is definable).

  It was previously pointed out that there must exist (non-discrete) examples of valued fields with the desired property in \cite[Remark 3.6.9]{Kartas_undecidability-tilting}, using an inexplicit counting argument.
  However, the reason for existential undecidability of the examples there is due to it not being decidable which one-variable polynomials over $\Q$ have roots, unlike in our example.

  The algebraic part has been known for some time as an obstruction in the model theory of henselian valued fields of mixed characteristic, see for instance \cite[Corollary 1.6]{AnscombeKuhlmann_notes} and \cite[Remark 7.4]{AnscombeFehm_existential-old}.
  Our theorem shows that the obvious attempt to repair the failure of the decidability statement \cite[Corollary 7.5]{AnscombeFehm_existential-old} in mixed characteristic, by requiring a decidable axiom scheme describing the algebraic part, still fails, even in the case of value group $\Z$.
\end{remark}

\end{document}